\providecommand{\U}[1]{\protect\rule{.1in}{.1in}}
\newtheorem{X}{X}[section]
\newtheorem{corollary}[X]{Corollary}
\newtheorem{lemma}[X]{Lemma}
\newtheorem{proposition}[X]{Proposition}
\newtheorem{theorem}[X]{Theorem}
\newtheorem{maintheorem}{Theorem}
\newtheorem{plain}[X]{}
\newtheorem{remark}[X]{Remark}
\theoremstyle{nonumberplain}
\newtheorem{proof}{Proof}
\newcommand{\df}{\smash{\lower.12em\hbox{\textup{\tiny def}}}}
\newcommand{\mss}{\mathscr{s}}
\titleformat*{\subsection}{\large\scshape}
\titleformat*{\subsubsection}{\slshape}
\titleformat*{\section}{\LARGE\bfseries}
\titleformat*{\subsection}{\Large\itshape}
\titleformat*{\subsubsection}{\scshape}
\titleformat*{\paragraph}{\itshape}
\setlist{nolistsep}
\newcommand{\eb}[1]{{\itshape\bfseries#1}}
\renewcommand{\emph}{\eb}
\let\cite\citealt
\newcommand{\bcomment}{}
\newcommand{\bfootnotesize}{\begin{footnotesize}}\newcommand\efootnotesize{\end{footnotesize}}
\newcommand{\bquote}{\begin{quote}}\newcommand\equote{\end{quote}}
\newcommand{\bsmall}{\begin{small}}\newcommand\esmall{\end{small}}
\newcommand{\btable}{\begin{table}}\newcommand{\etable}{\end{table}}
\newcommand{\bappendices}{\begin{appendices}}
\newcommand{\eappendices}{\end{appendices}}
\newcommand{\edocument}{
\makeindex
\setcounter{tocdepth}{1}
\begin{document}

\title{Grothendieck's standard conjecture of Lefschetz type over finite fields}
\author{J. S. Milne}
\date{\today}
\maketitle

\begin{abstract}
Grothendieck's standard conjecture of Lefschetz type has two main forms: the
weak form $C$ and the strong form $B$. The weak form is known for varieties
over finite fields as a consequence of the proof of the Weil conjectures. This
suggests that the strong form of the conjecture in the same setting may be the
most accessible of the standard conjectures. Here, as an advertisement for the
conjecture, we explain some of its remarkable consequences.

\end{abstract}

\tableoc\bigskip

All algebraic varieties are projective, smooth, and connected (unless denoted
by $S$). Let $H$ be a Weil cohomology theory on the algebraic varieties over
an algebraically closed field $k$. Let $X$ be a variety over $k$ of dimension
$d$, and let $L\colon H^{\ast}(X)\rightarrow H^{\ast+2}(X)$ be the Lefschetz
operator defined by a hyperplane section. The strong Lefschetz theorem states
that%
\[
L^{d-i}\colon H^{i}(X)\rightarrow H^{2d-i}(X)
\]
is an isomorphism for all $i\leq d$. The Lefschetz standard conjecture (in its
strong form) states that $L^{d-2i}$ induces an isomorphism on the
$\mathbb{Q}{}$-subspaces of algebraic classes (see \cite{kleiman1968}). We say
that the Lefschetz standard conjecture holds for the algebraic varieties over
a field $k$ if the strong form holds for the classical Weil cohomology
theories $(\ell$-adic \'{e}tale, de Rham in characteristic zero, crystalline
in characteristic $p$).

An \textit{almost-algebraic class} on an algebraic variety in characteristic
zero is an absolute Hodge class that becomes algebraic modulo $p$ for almost
all $p$ (see \ref{g6} below).

\begin{maintheorem}
\label{g1}The Lefschetz standard conjecture for algebraic varieties over
finite fields implies the almost-Hodge conjecture for abelian varieties, i.e.,
all Hodge classes on complex abelian varieties are almost-algebraic.
\end{maintheorem}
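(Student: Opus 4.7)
The plan is to combine Deligne's theorem on absolute Hodge classes with a specialization-and-reduction argument, invoking the Lefschetz standard conjecture at the finite-field end.

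Let $\gamma \in H^{2i}(A,\mathbb{Q}(i))$ be a Hodge class on a complex abelian variety $A$. By Deligne's 1982 theorem, $\gamma$ is absolute Hodge, so one half of the almost-algebraic condition is automatic and it remains to show that $\gamma$ becomes algebraic modulo $p$ for almost all $p$. I would descend $(A,\gamma)$ to a number field $K \subset \mathbb{C}$, spread out to an abelian scheme $\pi\colon \mathcal{A} \to S$ over a non-empty open $S \subset \Spec\mathcal{O}_K$, and use absolute-Hodgeness to realize $\gamma$ as a $\pi_1(S)$-invariant global section of $R^{2i}\pi_\ast \mathbb{Q}_\ell(i)$ for each $\ell$ invertible on $S$. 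Smooth proper base change then specializes this section at every closed point $s \in S$ to a Frobenius-invariant class $\gamma_s \in H^{2i}(\mathcal{A}_{\bar s},\mathbb{Q}_\ell(i))$, i.e.\ to a Tate class on the abelian variety $\mathcal{A}_s/\kappa(s)$ for any $\ell$ prime to the residue characteristic at $s$.

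The remaining step is to show that every such $\gamma_s$ is algebraic. This is precisely where the Lefschetz standard conjecture intervenes: combined with Tate's theorem for divisors and the Künneth standard conjecture (known over finite fields by Katz--Messing), it is expected to yield the full Tate conjecture for abelian varieties over finite fields. A clean motivic route proceeds as follows: the Lefschetz conjecture equates homological and numerical equivalence for the classical Weil cohomologies, so by Jannsen the category of motives over $\bar{\mathbb{F}}_p$ is semisimple Tannakian; one then compares the motivic Galois group of $\mathcal{A}_s$, via the $\ell$-adic fibre functor, with the base change of the Mumford--Tate group of a characteristic-zero lift. Since Hodge classes are invariants of the Mumford--Tate group, the comparison forces their reductions to be motivic, hence algebraic.

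The main obstacle is precisely this last step---extracting the full Tate conjecture for abelian varieties over finite fields from the Lefschetz conjecture. One will likely need the Hodge conjecture for CM abelian varieties (Pohlmann, André) and the Honda--Tate classification in order to identify the motivic Galois group of $\mathcal{A}_s$ with the Mumford--Tate group of a suitable CM lift. The spreading-out and specialization steps, by contrast, are formal consequences of Deligne's absolute-Hodge theorem and standard base-change machinery, and should present no serious difficulty.
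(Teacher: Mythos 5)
Your overall strategy---reduce the Hodge class itself modulo $p$ and then prove the reduction is algebraic by establishing the Tate conjecture for abelian varieties over finite fields---is not the paper's route, and as written it has a genuine gap at exactly the point you flag as ``the main obstacle''. The implication ``Lefschetz standard conjecture over $\mathbb{F}$ implies the Tate conjecture for abelian varieties over $\mathbb{F}$'' is Theorem 2 of this paper, and its proof occupies all of Section 2 (Weil classes, Andr\'e's reduction of families of Weil type to characteristic $p$, the variation of algebraic classes over a complete curve over $\mathbb{F}$, CM lifting \`a la Kisin--Vasiu); it is not available as an input to Theorem 1. Moreover, the ``clean motivic route'' you sketch starts from a false step: Conjecture $B$ does not by itself equate homological and numerical equivalence --- Kleiman's implication $A(X,L)\Rightarrow D(X)$ requires the Hodge standard conjecture (see Proposition \ref{g67}), which in this paper is itself only deduced (Theorem 2(b)) from the Lefschetz conjecture via the very results you would be assuming. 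Even granting the Tate conjecture for each $\ell$ separately, you would still owe an argument that the single ad\`elic section $\tilde{\gamma}$ reduces at $s$ to the class of one $\mathbb{Q}$-linear combination of cycles, as the definition of almost-algebraic in \ref{g6} demands.

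The paper's proof is much shorter and applies the hypothesis in a different place: not to the Hodge class $\gamma$, but to the inverse $\theta^{i}$ of the hard Lefschetz isomorphism, viewed as an absolute Hodge correspondence. After spreading out, $\theta^{i}$ specializes at a closed point of good reduction to the inverse of $L(s)^{d-i}$ on the special fibre, which is algebraic by the Lefschetz conjecture over $\mathbb{F}$ (Kleiman, $\theta\Leftrightarrow B$); hence $\theta^{i}$ is almost-algebraic for every variety $X$ and every $i$. One then invokes the purely characteristic-zero theorem (HAV, Theorem 4; in substance Andr\'e's Weil-class argument) that any class of cycles closed under the standard operations and containing the inverse Lefschetz operators contains all Hodge classes on abelian varieties. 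No Tate conjecture, no reduction of $\gamma$ itself, and no comparison of motivic Galois groups with Mumford--Tate groups is needed. If you insist on your approach, you are in effect trying to prove the stronger Theorem \ref{g43} (algebraicity of the reduction at a \emph{specific} prime of good reduction), which the paper only reaches in Section 2, after Theorem 1 is already in hand.
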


In the remaining statements, $p$ is a fixed prime number and $\mathbb{F}{}$ is
an algebraic closure of the field $\mathbb{F}{}_{p}$ of $p$ elements.

\begin{maintheorem}
\label{g3}The Lefschetz standard conjecture for algebraic varieties over
$\mathbb{F}{}$ implies

\begin{enumerate}
\item the full Tate conjecture for abelian varieties over $\mathbb{F}$;

\item the standard conjecture of Hodge type for abelian varieties in
characteristic $p$.
\end{enumerate}
\end{maintheorem}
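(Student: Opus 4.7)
The plan for both parts is to leverage three standard consequences of the Lefschetz conjecture $B$: the algebraicity of the K\"unneth projectors (conjecture $C$), the coincidence of $\sim_{\num}$ with $\sim_{\text{hom},\ell}$ for every $\ell\neq p$ (Kleiman), and the fact that the category $\mathsf{Mot}(\mathbb{F})$ of motives modulo numerical equivalence is then a semisimple Tannakian category (Jannsen). Part (a) will follow by combining this structure with Tate's theorem for divisors and the K\"unneth description $H^{\ast}(A)=\bigwedge^{\ast}H^{1}(A)$, while part (b) will reduce the positivity statement to the Hodge--Riemann bilinear relations in characteristic zero via a CM lift, using Theorem \ref{g1} to transport classes.

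For part (a), the equality $\sim_{\num}=\sim_{\text{hom},\ell}$ already supplies the ``semisimplicity'' component of the full Tate conjecture, so it remains to establish surjectivity of the cycle map onto the Tate classes. My plan is to argue inside $\mathsf{Mot}(\mathbb{F})$: for an abelian variety $A_{0}$ over $\mathbb{F}$, Honda--Tate together with Tate's theorem produces a CM lift $A/W$; after fixing an embedding $W\hookrightarrow\mathbb{C}$, a given Tate class on $A_{0}$ should correspond, via the motivic equivalence which for abelian CM motives is governed by the Serre group, to a Hodge class on $A_{\mathbb{C}}$. Theorem \ref{g1} then makes the latter almost-algebraic, and a specialization at the residue characteristic of $A_{0}$ produces an algebraic class on $A_{0}$ with the required cohomology class. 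I expect the main technical point to be that this passage from Tate classes on $A_0$ to Hodge classes on $A_{\mathbb{C}}$ is an equivalence, which should follow from the motivic Galois group computation for abelian CM motives but needs careful justification.

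For part (b), I would apply the same lifting machinery directly to algebraic classes: given a primitive algebraic class $\alpha$ on $A_{0}$, lift it via the motivic equivalence to an absolute Hodge class $\tilde{\alpha}$ on $A_{\mathbb{C}}$. The bilinear form $x,y\mapsto(-1)^{i}\Tr(L^{d-2i}x\cup y)$ on primitive absolute Hodge classes is positive-definite by the classical Hodge--Riemann bilinear relations in Betti cohomology, and it is compatible with the crystalline-to-de Rham-to-Betti comparison isomorphisms that identify algebraic classes on $A_{0}$ with their lifts on $A_{\mathbb{C}}$. Positivity then transports back to primitive algebraic classes on $A_{0}$, yielding the standard conjecture of Hodge type. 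The single main obstacle, common to both parts, is establishing this correspondence between algebraic classes on $A_{0}/\mathbb{F}$ and absolute Hodge classes on a CM lift $A_{\mathbb{C}}$: Theorem \ref{g1} supplies the direction Hodge $\leadsto$ algebraic, while the converse will have to come from the Tannakian comparison of motivic Galois groups in mixed characteristic, which is precisely where the full strength of $B$ is needed.
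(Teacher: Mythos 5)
Your overall strategy---lift to characteristic zero, translate Tate classes into Hodge classes on a CM lift, and invoke algebraicity of Hodge classes modulo $p$---has the same outline as the arguments of the papers cited here for Theorems \ref{g40} and \ref{g41}. But two steps as you have written them do not work. First, your opening claim that Conjecture $B$ yields $\sim_{\mathrm{num}}=\sim_{\mathrm{hom},\ell}$ is circular: in Kleiman's lattice of implications, $A(X,L)$ (hence $B(X)$) gives $D(X)$ only \emph{in the presence of the Hodge standard conjecture} (cf.\ Proposition \ref{g67} of this paper), and the Hodge standard conjecture is precisely part (b) of the statement you are proving. The paper instead obtains the semisimplicity ingredient of the full Tate conjecture directly from Weil's theorem that Frobenius acts semisimply on the cohomology of an abelian variety; no appeal to $D$ is needed.

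Second, and more seriously, the step ``Theorem \ref{g1} makes the Hodge class almost-algebraic, and a specialization at the residue characteristic of $A_{0}$ produces an algebraic class on $A_{0}$'' fails: almost-algebraicity asserts algebraicity of the reduction only for almost all primes, with no control over which finite set is excluded, so it says nothing about the specific prime $p$ at which $A_{0}$ lives. The paper flags exactly this point after Theorem \ref{g7} and devotes all of Section 2 to repairing it: one needs Theorem \ref{g43} (specialization of absolute Hodge classes to algebraic classes at a \emph{given} prime of good reduction), whose proof is not a formal consequence of Theorem \ref{g1} but requires Deligne's deformation of a CM abelian variety to a power of a CM elliptic curve through abelian varieties of split Weil type, Andr\'e's codimension estimate on the boundary of the reduction of the moduli space (Lemma \ref{g12}) to connect the two reductions by a curve over $\mathbb{F}$, and the variational Proposition \ref{g13} in characteristic $p$. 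Your proposal contains no substitute for this mechanism. Finally, the passage from Tate classes on $A_{0}$ to Hodge classes on a CM lift, which you rightly identify as the crux of (a), is the substantial Tannakian content of the cited work on the Tate conjecture for abelian varieties over $\mathbb{F}$; it does not follow from the existence of a CM lift alone, so as it stands your part (a) restates the difficulty rather than resolving it.
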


See \S 3 for the Tate conjecture and \cite{kleiman1994}, p.~16, for the Hodge
standard conjecture.

\begin{maintheorem}
\label{g4}The full Tate conjecture for algebraic varieties over $\mathbb{F}$
implies Grothendieck's standard conjectures over $\mathbb{F}{}$.
\end{maintheorem}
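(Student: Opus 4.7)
The standard conjectures to verify over $\mathbb{F}$ are $B$ (strong Lefschetz on algebraic classes), $C$ (Künneth), $D$ (homological equivalent to numerical equivalence), and $I$ (Hodge type). Conjecture $C$ is already available over $\mathbb{F}$: the Riemann hypothesis from the Weil conjectures lets one extract the Künneth projectors as polynomials in the Frobenius endomorphism. So the task is to deduce $B$, $D$, and $I$ from the full Tate conjecture.

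For $B$, use that the strong Lefschetz theorem on $\ell$-adic cohomology gives a Frobenius-equivariant isomorphism $L^{d-2i}\colon H^{2i}(X,\mathbb{Q}_{\ell}(i))\xrightarrow{\sim} H^{2d-2i}(X,\mathbb{Q}_{\ell}(d-i))$. The surjectivity half of the Tate conjecture identifies the Frobenius-fixed subspace on each side with the $\mathbb{Q}_{\ell}$-span of the algebraic classes, and $L$ commutes with Frobenius after the Tate twist. Restricting the cohomological isomorphism to the fixed subspaces and descending from $\mathbb{Q}_{\ell}$ to $\mathbb{Q}$ then yields $B$.

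For $D$, Poincaré duality supplies a perfect Frobenius-equivariant pairing between $H^{2i}(X,\mathbb{Q}_{\ell}(i))$ and $H^{2d-2i}(X,\mathbb{Q}_{\ell}(d-i))$. The semisimplicity half of the full Tate conjecture makes the Frobenius-fixed subspace a direct summand on each side, so the restriction of the pairing to fixed subspaces is still non-degenerate. Translating this through the surjectivity of the cycle class map shows that the intersection pairing is non-degenerate on $\mathbb{Q}_{\ell}$-algebraic classes modulo homological equivalence, hence on $\mathbb{Q}$-algebraic classes as well; a numerically trivial class is therefore homologically trivial, which is $D$.

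The main obstacle is $I$, since in positive characteristic there is no direct transcendental access to positivity. With $B$, $C$, $D$ in hand, the category of motives modulo numerical equivalence over $\mathbb{F}$ is a semisimple $\mathbb{Q}$-linear Tannakian category graded by weight (Jannsen's theorem, together with the Künneth decomposition supplied by $C$). The plan is then to argue, using Honda--Tate theory together with the matching of Frobenius eigenvalues provided by the full Tate conjecture at every $\ell$, that every simple motive over $\mathbb{F}$ occurs as a direct summand of the motive of some abelian variety. The Hodge-type conjecture for abelian varieties over $\mathbb{F}$ follows from $B$ via Theorem~\ref{g3}(b), and the tensor and functorial structure of the motivic category then propagates positivity from abelian motives to all motives. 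The delicate point throughout is that semisimplicity must be available at \emph{every} prime $\ell$, which is precisely what distinguishes the \emph{full} Tate conjecture from its cohomological shadow at a single $\ell$.
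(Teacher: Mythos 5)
Your plan follows the paper's route essentially step for step: $C$ from the Weil conjectures, $B$ and $D$ by transferring the Frobenius-equivariant Lefschetz isomorphism and the Poincar\'e pairing to algebraic classes via the Tate and semisimplicity statements, and the Hodge standard conjecture by embedding $H_{\ell}^{\ast}(X)$ into the cohomology of an abelian variety through Honda--Tate (the paper's Propositions \ref{g23} and \ref{g26}) and then invoking Theorem~\ref{g3}(b) together with the transfer result Proposition \ref{g20}. The one step to tighten is the descent from $\mathbb{Q}_{\ell}$ to $\mathbb{Q}$ in your argument for $B$, which needs $D$ (or the paper's device in Proposition \ref{g29} of approximating the graph of the inverse Lefschetz isomorphism by an algebraic correspondence); since you establish $D$ independently of $B$, reordering closes this.
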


We prove these theorems in the first four sections of the paper. In Section 5,
we use a construction of Sch\"{a}ppi to give unconditional variants of our
theorems, and in Section 6, we list some statements that imply the Lefschetz
standard conjecture.

We refer to \cite{kleiman1994} for the various forms, $A,B,C,D,$ of the
Lefschetz standard conjecture. We assume that the reader is familiar with the
expository article \cite{milneHAV}, cited as HAV. Throughout, $\mathbb{Q}^{\mathrm{al}}$ is the algebraic closure of $\mathbb{Q}{}$ in $\mathbb{C}{}$.

\section{Proof of Theorem 1}

In this section, $X$ is an algebraic variety over a field $k$ of
characteristic zero.

\begin{plain}
\label{g2}Suppose first that $k$ is algebraically closed. We let $B^{r}(X)$
denote the space of absolute Hodge classes of codimension $r$ on $X$. Thus
$B^{r}(X)$ is a finite-dimensional $\mathbb{Q}{}$-subspace of the ad\`{e}lic
cohomology group $H_{\mathbb{A}}^{2r}(X)(r)$ (\cite{deligne1982}, \S 3). Let
$k\rightarrow k^{\prime}$ be a homomorphism from $k$ into a second
algebraically closed field $k^{\prime}$; then the canonical map $H_{\mathbb{A}%
}^{2r}(X)(r)\rightarrow H_{\mathbb{A}{}}^{2r}(X_{k^{\prime}})(r)$ induces an
isomorphism $B^{r}(X)\rightarrow B^{r}(X_{k^{\prime}})$ (ibid., 2.9). For an
abelian variety over $\mathbb{C}$, every Hodge class is absolutely Hodge
(ibid., Main Theorem 2.11).
\end{plain}

\begin{plain}
\label{g5}Now allow $k$ to be arbitrary of characteristic $0$, and let
$k^{\mathrm{al}}$ be an algebraic closure of $k$. Then $\Gal(k^{\mathrm{al}%
}/k)$ acts on $B^{r}(X_{k^{\mathrm{al}}})$ through a finite quotient, and
$B^{r}(X)\overset{\df}{=}B^{r}(X_{k^{\mathrm{al}}})^{\Gal(k^{\mathrm{al}}/k)}$.
\end{plain}

\begin{plain}
\label{g6}We define an \emph{almost-algebraic} class of codimension $r$ on $X$
to be an absolute Hodge class $\gamma$ of codimension $r$ such that there
exists a cartesian square%
\[
\begin{tikzcd}
\mathcal{X}\arrow{d}{f}& X\arrow{l}\arrow{d}\\
S & \Spec(k)\arrow{l}
\end{tikzcd}
\]
and a global section $\tilde{\gamma}$ of $R^{2r}f_{\ast}\mathbb{A}{}(r)$
satisfying the following conditions,

\begin{itemize}
\item $S$ is the spectrum of a regular integral domain of finite type over
$\mathbb{Z}{};$

\item $f$ is smooth and projective;

\item the fibre of $\tilde{\gamma}$ over $\Spec(k)$ is $\gamma$, and the
reduction of $\tilde{\gamma}$ at $s$ is algebraic for all closed points $s$ in
a dense open subset of $S$.
\end{itemize}

\noindent Cf. \cite{serre1974}, 5.2, and \cite{tate1994}, p.76. Usually,
almost-algebraic classes are not required to be absolutely Hodge, but since we
have a robust theory of absolute Hodge classes, it is natural to include it.
\end{plain}

\begin{theorem}
\label{g7}Assume that the Lefschetz standard conjecture holds for algebraic
varieties over finite fields. Then all absolute Hodge classes on abelian
varieties over fields of characteristic zero are almost-algebraic.
\end{theorem}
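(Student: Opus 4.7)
The plan is to reduce, via Deligne's Principle~B for absolute Hodge classes, to the case of abelian varieties of CM type, where good reduction everywhere and a Tannakian comparison of motive categories turn the statement into one about algebraicity of Tate classes on abelian varieties over $\mathbb{F}$.

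By \ref{g5} and \ref{g2} one reduces to the case $k=\mathbb{C}$, in which $B^{r}(X)$ coincides with the space of Hodge classes on the complex abelian variety $X$. Given $\gamma\in B^{r}(X)$, one spreads $X$ to an abelian scheme $f\colon \mathcal{A}\to S$ over a regular integral ring of finite type over $\mathbb{Z}$, and uses the absolute Hodge condition (which controls all realisations simultaneously) to extend $\gamma$ to a global section $\tilde\gamma$ of $R^{2r}f_{\ast}\mathbb{A}(r)$ after shrinking $S$. The content of the theorem is thus that $\tilde\gamma_{s}$ is algebraic on $\mathcal{A}_{s}$ for almost all closed points $s\in S$.

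The next step is Principle~B in the form: if $\mathcal{A}\to T$ is a smooth connected family of abelian varieties over a complex base and $\gamma$ is a flat section of $R^{2r}\mathbb{Q}(r)$ whose fibres are all Hodge, then almost-algebraicity propagates across $T$, since one may pull back a spreading of a single fibre to a spreading over a thickening of $T$ and restrict to other fibres. Combined with the density of CM points in the Hodge locus of any Hodge class on a complex abelian variety (a consequence of the Mumford--Tate group at a very general point of the locus being reductive), this reduces the theorem to the case in which $X$ is itself of CM type. For such $X$ the spreading out is automatic: $X$ descends to a model over $\Spec(\mathcal{O}_{E}[1/N])$ for some number field $E$ and integer $N$, and the N\'{e}ron model gives good reduction at every closed point of that base.

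It remains to identify the specialisation $\tilde\gamma_{s}$ with an algebraic cycle on the abelian variety $\mathcal{A}_{s}$ over the finite field $\kappa(s)$. The Lefschetz standard conjecture over $\mathbb{F}$ enters here: by Jannsen's theorem it implies that the category of motives modulo numerical equivalence over $\mathbb{F}$ is a semisimple neutral Tannakian category in which numerical and $\ell$-adic homological equivalence coincide. Following HAV, its motivic Galois group on the Tannakian subcategory generated by abelian varieties is a Weil-number pro-torus that is a quotient of the Serre group governing CM Hodge motives over $\mathbb{Q}^{\mathrm{al}}$; under the resulting comparison of Tannakian categories, the Serre-group-invariant tensor $\gamma$ corresponds to a Weil-number-torus-invariant tensor on $\mathcal{A}_{s}$, which is therefore algebraic. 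The principal obstacle, and the place where the Lefschetz hypothesis is truly indispensable, is this last Tannakian comparison: without it there is no motive category over $\mathbb{F}$ to compare with. Principle~B and the deformation to CM type, by contrast, are essentially formal.
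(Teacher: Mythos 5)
Your overall strategy (Principle B, CM density, then a Tannakian argument over $\mathbb{F}$) is not the paper's, and it contains two genuine gaps. First, the step you call ``essentially formal'' is precisely where the hypothesis must do its work. Almost-algebraicity does not formally propagate along a connected complex family: knowing that $\gamma_{t_0}$ becomes algebraic modulo almost all primes says nothing a priori about $\gamma_{t_1}$, because to transfer algebraicity from the reduction of the fibre at $t_0$ to the reduction of the fibre at $t_1$ you must propagate algebraic classes along the special fibre of the spread-out family over $\mathbb{F}$ --- and that is the content of Proposition \ref{g13}, which itself requires the Lefschetz standard conjecture over $\mathbb{F}$ and a delicate reduction argument (connectedness and codimension of the boundary in the special fibre of a compactified moduli scheme, etc.). The alternative route, Andr\'e's deformation principle for ``motivated'' classes, works only once one knows that the inverse $\theta^{i}$ of the Lefschetz isomorphism is itself in the class of cycles under consideration. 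Establishing that $\theta^{i}$ is almost-algebraic is exactly what the paper's proof of Theorem \ref{g7} does: it spreads $X$ and $L$ over a ring of finite type over $\mathbb{Z}$, specializes $\theta^{i}\otimes 1$ to the inverse of $L(s)^{d-i}$ at closed points, and invokes Kleiman's equivalence $\theta\Leftrightarrow B$ over the finite residue fields --- after which the entire Deligne--Andr\'e machinery (your Principle B, CM density, Weil classes) is absorbed into a single citation (HAV, Theorem 4). You never establish this, so your propagation step is unsupported.

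Second, your treatment of the CM case is circular. The assertion that a Weil-number-pro-torus-invariant tensor on an abelian variety over $\mathbb{F}$ is algebraic \emph{is} the Tate conjecture for abelian varieties over $\mathbb{F}$; equivalently, identifying the motivic Galois group of the abelian motives over $\mathbb{F}$ with the Weil-number pro-torus (rather than a smaller group admitting more invariant tensors than there are algebraic classes) is exactly Theorem \ref{g40}, which the paper deduces from the specialization statement you are trying to prove, not the other way around. What the paper actually does in the CM case (for the finer Theorem \ref{g43}, which concerns a specific prime of good reduction) is decompose CM Hodge classes into split Weil classes (Andr\'e), deform to a power of a CM elliptic curve inside a family of split Weil type, and propagate algebraicity of the reductions along a curve in the special fibre using Proposition \ref{g13}. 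To salvage your outline you would have to replace the Tannakian shortcut by an argument of this kind and justify the propagation step from the Lefschetz hypothesis rather than by fiat.
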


\begin{proof}
It suffices to prove this with $k=\mathbb{C}{}$, where it becomes a question
of showing that Hodge classes on abelian varieties are almost-algebraic. Let
$X$ be an algebraic variety of dimension $d$ over $\mathbb{C}{}$, and let
$L\colon H^{\ast}(X,\mathbb{Q}{})\rightarrow H^{\ast+2}(X,\mathbb{Q}{}(1))$ be
the Lefschetz operator on Betti cohomology defined by a hyperplane section.
According to the strong Lefschetz theorem, the map $L^{d-i}\colon
H^{i}(X,\mathbb{Q})\rightarrow H^{2d-i}(X,\mathbb{Q}{})(d-i)$ is an
isomorphism. For $i\leq d$, let $\theta^{i}\colon H^{2d-i}(X,\mathbb{Q}%
{})(d-i)\rightarrow H^{i}(X,\mathbb{Q}{})$ denote the inverse isomorphism.

The isomorphism $\theta^{i}\otimes1\colon H_{\mathbb{A}{}}^{2d-i}%
(X)(d-i)\rightarrow H_{\mathbb{A}}^{i}(X)$ is absolutely Hodge (i.e., its
graph is an absolute Hodge class). Consider a diagram as in \ref{g6}. For a
closed point $s$ of $S$ such that $X$ and $L$ have good reduction, $\theta
^{i}\otimes1$ specializes to the inverse of the isomorphism $L(s)^{d-i}\colon
H_{\mathbb{A}{}}^{i}(X(s))\rightarrow H_{\mathbb{A}{}}^{2d-i}(X(s))(d-i)$. As
we are assuming the standard conjecture over $\mathbb{F}{}$, this inverse is
algebraic (\cite{kleiman1994}, 4-1, $\theta\Leftrightarrow B$). Hence
$\theta^{i}$ is almost-algebraic.

Since this holds for all $X$ and $i$, the Lefschetz standard conjecture holds
for almost-algebraic classes on algebraic varieties over $\mathbb{C}{}$. As
for algebraic classes, this implies that all Hodge classes on abelian
varieties are almost-algebraic (HAV, Theorem 4).
\end{proof}

Note that the theorem does not say that an absolute Hodge class becomes
algebraic modulo $p$ for any specific $p$, even when the abelian variety has
good reduction at $p$. In the next section, we prove this.

\section{Almost-algebraic classes on abelian varieties}

Fix a prime number $p$, and let $\mathbb{F}{}$ be an algebraic closure of
$\mathbb{F}{}_{p}$. In the following, $\ell$ is a prime number $\neq p$.

\subsection{Variation of algebraic classes over $\mathbb{F}{}$}

\begin{proposition}
\label{g13}Let $S$ be a complete smooth curve over $\mathbb{F}{}$ and $f\colon
X\rightarrow S$ an abelian scheme over $S$. Assume that the Lefschetz standard
conjecture holds for $X$ and $\ell$-adic \'{e}tale cohomology. Let $t$ be a
global section of the sheaf $R^{2r}f_{\ast}\mathbb{Q}{}_{\ell}(r)$; if $t_{s}$
is algebraic for one $s\in S(\mathbb{F})$, then it is algebraic for all $s$.
\end{proposition}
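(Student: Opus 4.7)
The plan is to produce an algebraic class $\omega\in H^{2r}(X,\mathbb{Q}_{\ell}(r))$ on the total space whose Leray projection to $V:=H^{0}(S,R^{2r}f_{\ast}\mathbb{Q}_{\ell}(r))$ is $t$; restriction of such an $\omega$ to each fiber will then realise $t_{s}$ as an algebraic class.

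Two preliminaries reduce the problem. First, because $f\colon X\to S$ is an abelian scheme, the multiplication-by-$n$ endomorphisms supply K\"unnemann's relative Chow--K\"unneth decomposition, an algebraic splitting of the Leray spectral sequence; in particular $V$ is a direct summand of $H^{2r}(X,\mathbb{Q}_{\ell}(r))$ via a projector $\pi_{V}$ realised by an algebraic correspondence on $X\times_{\mathbb{F}}X$. Second, because $S$ is a curve, the pullback $i_{s}^{\ast}\colon H^{2r}(X,\mathbb{Q}_{\ell}(r))\to H^{2r}(X_{s},\mathbb{Q}_{\ell}(r))$ annihilates every Leray summand $H^{p}(S,R^{q}f_{\ast}\mathbb{Q}_{\ell}(r))$ with $p\geq 1$ (restriction to a point factors through degree zero on the base), so $i_{s}^{\ast}=\mathrm{ev}_{s}\circ\pi_{V}$, and each $\mathrm{ev}_{s}$ is injective on $V$ since $R^{2r}f_{\ast}\mathbb{Q}_{\ell}(r)$ is lisse on the connected $S$. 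Granting these, it suffices to exhibit an algebraic $\omega\in H^{2r}(X,\mathbb{Q}_{\ell}(r))$ with $i_{s_{0}}^{\ast}\omega=t_{s_{0}}$: then $\pi_{V}(\omega)\in V$ is algebraic with stalk $t_{s_{0}}$ at $s_{0}$, hence equals $t$ by injectivity, and for every $s$ the class $t_{s}=i_{s}^{\ast}(\pi_{V}(\omega))$ is the restriction of an algebraic class, hence algebraic.

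To construct $\omega$, write $t_{s_{0}}=\mathrm{cl}(z)$ for $z\in Z^{r}(X_{s_{0}})\otimes\mathbb{Q}_{\ell}$ and form the algebraic Gysin class $i_{s_{0},\ast}[z]\in H^{2r+2}(X,\mathbb{Q}_{\ell}(r+1))$. Under the Leray splitting the total Lefschetz operator decomposes as $L=L_{\eta}+f^{\ast}[s_{0}]$, with $L_{\eta}$ the relative hyperplane (acting fibrewise as the Lefschetz operator on each $X_{s}$). The hypothesised Lefschetz standard conjecture for $X$ supplies an algebraic inverse $\Lambda$ of $L$; combining $\Lambda$ with the K\"unnemann projectors and the fibrewise hard Lefschetz (itself algebraic for abelian varieties, via the relative Chow--K\"unneth decomposition), one extracts an algebraic section $\Gamma^{\perp}\colon H^{2r}(X_{s_{0}},\mathbb{Q}_{\ell}(r))\to H^{2r}(X,\mathbb{Q}_{\ell}(r))$ of $i_{s_{0}}^{\ast}$ on the subspace $\mathrm{ev}_{s_{0}}(V)$, and sets $\omega:=\Gamma^{\perp}([z])$.

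The main obstacle lies in this last construction. The Lefschetz standard conjecture delivers the algebraic inverse of the \emph{total} operator $L$, whereas what the argument really wants is an algebraic section of the restriction $i_{s_{0}}^{\ast}$ to a fiber, or equivalently control over how $\Lambda$ interacts with the Gysin pushforward $i_{s_{0},\ast}$ and with the individual Leray components of $L$. Sorting out the algebra---decomposing $L$ and $\Lambda$ via the K\"unnemann projectors, and matching pieces against $i_{s_{0},\ast}[z]$---is the technical heart of the proof. What makes it tractable is that the identity $i_{s_{0}}^{\ast}\Gamma^{\perp}=\mathrm{id}$ need only be verified at the single fiber $X_{s_{0}}$: the injectivity of $\mathrm{ev}_{s_{0}}$ then propagates the conclusion to all of $S$, so the required identity is ultimately a fibrewise one on $X_{s_{0}}$.
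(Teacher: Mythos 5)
Your reduction is sound as far as it goes: the Leray spectral sequence of $f$ degenerates via the multiplication-by-$n$ correspondences, the projector onto $V=H^{0}(S,R^{2r}f_{*}\mathbb{Q}_{\ell}(r))$ is algebraic, $i_{s}^{*}$ factors as $\mathrm{ev}_{s}\circ\pi_{V}$ with $\mathrm{ev}_{s}$ injective, and everything comes down to producing an algebraic class in $H^{2r}(X,\mathbb{Q}_{\ell}(r))$ whose Leray projection is $t$. But the step you defer --- extracting from the hypothesis an algebraic section $\Gamma^{\perp}$ of $i_{s_{0}}^{*}$ over $\mathrm{ev}_{s_{0}}(V)$ --- is not a technicality: applied to $t_{s_{0}}$, such a section \emph{is} the sought algebraic lift, so it is a statement at least as strong as the proposition itself, and nothing in your sketch shows how the algebraic inverse $\Lambda$ of the total operator $L$ yields it. The difficulty you yourself point to, namely that $\Lambda$ inverts $L=L_{\eta}+f^{*}[s_{0}]$ globally whereas you need to invert only the single component $f^{*}[s_{0}]\colon H^{0}(S,R^{2r}f_{*}\mathbb{Q}_{\ell})\to H^{2}(S,R^{2r}f_{*}\mathbb{Q}_{\ell})(1)$ of a fixed Leray summand, is exactly where the argument stops. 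As written, the proposal reduces the proposition to an unproved claim of equal strength; this is a genuine gap.

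The paper never constructs a section at all; it runs a dimension count on spaces of algebraic classes. Writing $aH$ for the span of the algebraic classes in a cohomology group, restriction gives $\dim aH^{0}(S,R^{2r}f_{*}\mathbb{Q}_{\ell})\le\dim aH^{2r}(X_{s},\mathbb{Q}_{\ell})^{\pi}$; the Lefschetz standard conjecture for the abelian variety $X_{s}$, which is unconditional (\cite{kleiman1968}, 2A11), gives $\dim aH^{2r}(X_{s},\mathbb{Q}_{\ell})^{\pi}=\dim aH^{2d-2r}(X_{s},\mathbb{Q}_{\ell})^{\pi}$; the Gysin map gives $\dim aH^{2d-2r}(X_{s},\mathbb{Q}_{\ell})^{\pi}\le\dim aH^{2}(S,R^{2d-2r}f_{*}\mathbb{Q}_{\ell})$; and the assumed conjecture for the total space $X$ identifies the two ends of the chain. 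Hence all the inequalities are equalities and $aH^{2r}(X_{s},\mathbb{Q}_{\ell})^{\pi}=j_{s}^{*}\bigl(aH^{0}(S,R^{2r}f_{*}\mathbb{Q}_{\ell})\bigr)$, which is independent of $s$. This squeeze delivers both the proposition and the algebraic lift of $t$ you were trying to build directly (Remark \ref{g14}); if you want to keep your framework, replace the construction of $\Gamma^{\perp}$ by this counting argument.
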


\begin{proof}
For a positive integer $n$ prime to $p$, let $\theta_{n}$ denote the
endomorphism of $X/S$ acting as multiplication by $n$ on the fibres. By a
standard argument (\cite{kleiman1968}, p.~374), $\theta_{n}^{\ast}$ acts as
$n^{j}$ on $R^{j}f_{\ast}\mathbb{Q}_{\ell}$. As $\theta_{n}^{\ast}$ commutes
with the differentials $d_{2}$ of the Leray spectral sequence $H^{i}%
(S,R^{j}f_{\ast}\mathbb{Q}_{\ell})\implies H^{i+j}(X,\mathbb{Q}{}_{\ell})$, we
see that it degenerates at the $E_{2}$-term and%
\[
H^{r}(X,\mathbb{Q}_{\ell}\mathbb{)}\simeq\bigoplus_{i+j=r}H^{i}(S,R^{j}%
f_{\ast}\mathbb{Q}{}_{\ell})\text{,}%
\]
where $H^{i}(S,R^{j}f_{\ast}\mathbb{Q}{}_{\ell})$ is the direct summand of
$H^{r}(X,\mathbb{Q}{}_{\ell}\mathbb{)}$ on which $n$ acts as $n^{j}$. We let
$a\!H$ denote the $\mathbb{Q}{}$-subspace of a cohomology group $H$ spanned by
the algebraic classes.

Let $s\in S(\mathbb{F}{})$ and let $\pi=\pi_{1}(S,s)$. The inclusion
$j_{s}\colon X_{s}\hookrightarrow X$ induces an isomorphism $j_{s}^{\ast
}\colon H^{0}(S,R^{2r}f_{\ast}\mathbb{Q}{}_{\ell})\rightarrow H^{2r}%
(X_{s},\mathbb{Q}{}_{\ell})^{\pi}$ preserving algebraic classes, and so%
\begin{equation}
\dim a\!H^{0}(S,R^{2r}f_{\ast}\mathbb{Q}{}_{\ell})\leq\dim a\!H^{2r}%
(X_{s},\mathbb{Q}{}_{\ell})^{\pi}. \label{e2}%
\end{equation}
Similarly, the Gysin map $j_{s\ast}\colon H^{2d-2r}(X_{s},\mathbb{Q}{}_{\ell
})\rightarrow{}H^{2d-2r+2}(X,\mathbb{Q}{}_{\ell})$, where $d=\dim(X/S)$,
induces a map $H^{2d-2r}(X_{s},\mathbb{Q}{}_{\ell})^{\pi}\rightarrow
H^{2}(S,R^{2d-2r}f_{\ast}\mathbb{Q}_{\ell})$ preserving algebraic classes, and
so%
\begin{equation}
\dim a\!H^{2d-2r}(X_{s},\mathbb{Q}{}_{\ell})^{\pi}\leq\dim a\!H^{2}%
(S,R^{2d-2r}f_{\ast}\mathbb{Q}_{\ell})\text{.} \label{e3}%
\end{equation}
Because the Lefschetz standard conjecture holds for $X_{s}$
(\cite{kleiman1968}, 2A11),%
\begin{equation}
\dim a\!H^{2r}(X_{s},\mathbb{Q}{}_{\ell})^{\pi}=\dim a\!H^{2d-2r}%
(X_{s},\mathbb{Q}{}_{\ell})^{\pi}\text{.} \label{e4}%
\end{equation}
Hence,
\begin{align*}
\dim a\!H^{0}(S,R^{2r}f_{\ast}\mathbb{Q}_{\ell}{})  &
\overset{(\text{\ref{e2}})}{\leq}\dim a\!H^{2r}(X_{s}{},\mathbb{Q}{}_{\ell
})^{\pi}\overset{(\text{\ref{e4})}}{=}\dim a\!H^{2d-2r}(X_{s},\mathbb{Q}%
{}_{\ell})^{\pi}\\
&  \overset{(\text{\ref{e3}})}{\leq}\dim a\!H^{2}(S,R^{2d-2r}f_{\ast
}\mathbb{Q}_{\ell})\text{.}%
\end{align*}
The Lefschetz standard conjecture for $X$ implies that
\[
\dim a\!H^{0}(S,R^{2r}f_{\ast}\mathbb{Q}{}_{\ell})=\dim a\!H^{2}%
(S,R^{2d-2r}f_{\ast}\mathbb{Q}_{\ell}),
\]
and so the inequalities are equalities. Thus%
\[
a\!H^{2r}(X_{s},\mathbb{Q}{}_{\ell})^{\pi}=a\!H^{0}(S,R^{2r}f_{\ast}%
\mathbb{Q}_{\ell}),
\]
which is independent of $s$.
\end{proof}

\begin{remark}
\label{g14}The proof shows that $t$, when regarded as an element of
$H^{2r}(X,\mathbb{Q}{}_{\ell}(r))$, is algebraic.
\end{remark}

\subsection{Weil classes\qquad}

Fix a prime $w$ of $\mathbb{Q}{}^{\mathrm{al}}$ dividing $p{}$. The residue
field at $w$ is an algebraic closure $\mathbb{F}{}$ of $\mathbb{F}{}_{p}$. We
refer to \cite{deligne1982} or HAV for facts on abelian varieties of Weil type.

\begin{proposition}
\label{g8}Assume that the Lefschetz standard conjecture holds for algebraic
varieties over $\mathbb{F}{}$ and $\ell$-adic \'{e}tale cohomology, some
$\ell\neq p$. Let $(A$, $\nu)$ be an abelian variety over $\mathbb{Q}%
{}^{\mathrm{al}}$ of split Weil type relative to a CM field $E$, and let $t\in
W_{E}(A)\subset H_{\mathbb{A}}^{2r}(A)$ be a Weil class on $A$. If $A$ has
good reduction at $w$ to an abelian variety $A_{0}$ over $\mathbb{F}{}$, then
the element $(t_{\ell})_{0}$ of $H^{2r}(A_{0},\mathbb{Q}{}_{\ell})$ is algebraic.
\end{proposition}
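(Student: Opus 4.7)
The plan is to spread $A$ into a family of abelian varieties of split Weil type parameterised by a base $T$, reduce the whole picture modulo $p$, and then transport algebraicity along a curve in the special fibre from a CM point where the class is known to be algebraic in characteristic $p$ to the point specialising $A$ to $A_0$. The transport is effected by Proposition \ref{g13}, whose hypothesis (the Lefschetz standard conjecture for $\ell$-adic cohomology over $\mathbb{F}$) is exactly what we are assuming.

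In more detail, I would follow Deligne's proof that Weil classes are absolute Hodge (see \cite{deligne1982} and HAV): the triple $(A,\nu,t)$ appears as the fibre over a point $\sigma \in T(\mathbb{Q}^{\mathrm{al}})$ of a smooth connected family $f\colon \mathcal{A}\to T$ of abelian varieties of split Weil type relative to $E$, and $t$ extends to a global section $\tilde{t}$ of $R^{2r}f_{\ast}\mathbb{A}(r)$. The decisive geometric input is that $T$ contains a special point $\sigma_0$ whose fibre $\mathcal{A}_{\sigma_0}$ is isogenous over $\mathbb{Q}^{\mathrm{al}}$ to a power $B^{2n}$ of a CM elliptic curve $B$; on such a power the Hodge ring is generated by divisor classes, and in particular $\tilde{t}_{\sigma_0}$ becomes a $\mathbb{Q}$-polynomial in divisor classes.

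Next, I would spread $(T,f,\mathcal{A},\tilde{t},\sigma,\sigma_0)$ over a finitely generated $\mathbb{Z}_{(p)}$-subalgebra $R \subset \mathbb{Q}^{\mathrm{al}}$, arranging that $w$ restricts to a maximal ideal of $R$ with residue field in $\mathbb{F}$, that $\mathcal{A}_{\sigma} = A$ has good reduction $A_0$, and that $\mathcal{A}_{\sigma_0}$ has good reduction to an abelian variety over $\mathbb{F}$ still isogenous to a product of CM elliptic curves. The divisor classes witnessing $\tilde{t}_{\sigma_0}$ specialise to divisor classes in characteristic $p$, so the reduction $(\tilde{t}_{\ell})_{\bar{\sigma}_0}$ is algebraic. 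Now choose a complete smooth curve $S \hookrightarrow \bar{T}$ passing through both $\bar{\sigma}$ and $\bar{\sigma}_0$ (passing to a finite cover of $\bar{T}$ if necessary) and pull back to get an abelian scheme $X \to S$ together with a global section of $R^{2r}f_{\ast}\mathbb{Q}_{\ell}(r)$ which is algebraic at $\bar{\sigma}_0$. Proposition \ref{g13} then forces algebraicity at every point of $S(\mathbb{F})$, and in particular at $\bar{\sigma}$, yielding that $(t_{\ell})_0 \in H^{2r}(A_0,\mathbb{Q}_{\ell})$ is algebraic.

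The principal obstacle is carrying out the spread carefully enough that the CM fibre $\mathcal{A}_{\sigma_0}$ persists as a product of CM elliptic curves after reduction modulo $w$, and that $\bar{\sigma}$ and $\bar{\sigma}_0$ end up in a common irreducible component of the reduction of $T$ (otherwise one cannot link them by a curve). Establishing these is essentially the same geometric input as in Deligne's original absolute-Hodge proof for Weil classes; the genuinely new ingredient is Proposition \ref{g13}, which uses the Lefschetz hypothesis to propagate algebraicity from the special fibre to the generic one in characteristic $p$.
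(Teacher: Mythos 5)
Your overall strategy is the paper's: spread $(A,\nu,t)$ into Deligne's family of split Weil type, locate a fibre isogenous to a power of a CM elliptic curve where the class is visibly algebraic, reduce modulo $w$, and propagate algebraicity along a complete curve in the special fibre using Proposition \ref{g13}. So the architecture is right, and you have correctly isolated where the difficulty lies.

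But the step you defer --- producing a complete smooth curve in the characteristic-$p$ fibre joining $\bar{\sigma}$ and $\bar{\sigma}_{0}$ inside the locus where the abelian scheme is defined --- is a genuine gap, and your claim that it is ``essentially the same geometric input as in Deligne's original absolute-Hodge proof'' is not correct. Deligne's argument is purely a characteristic-zero connectedness/monodromy argument over $\mathbb{C}$; it says nothing about the geometry of the reduction of the parameter space. An arbitrary spread of $T$ over a finitely generated $\mathbb{Z}_{(p)}$-algebra gives no control whatever over the special fibre: it may be disconnected, the two sections may land in different components, and even if they do not, there is no reason a complete curve through both points exists inside the open locus carrying the abelian scheme (completeness is essential for Proposition \ref{g13}, whose proof uses the Leray spectral sequence and the Gysin map for a proper base). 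The paper resolves this with Andr\'{e}'s reduction argument: one maps $S$ to a PEL moduli variety, takes the closure $\mathcal{S}^{\ast}$ of $S$ in the minimal compactification $\mathcal{M}^{\ast}$ of the moduli scheme over $\mathcal{O}_{w}$ (\cite{chaiF1990}), and invokes Lemma \ref{g12} (Andr\'{e} 2.4.2), which says the boundary of $\mathcal{S}^{\ast}_{\mathbb{F}}$ has codimension at least two; this gives connectedness of the special fibre and, after blowing up at the images of $\mss_{1}$ and $\mss_{2}$, allows a Bertini-type choice of a projective flat $\mathcal{O}_{w}$-curve through both reduction points that avoids the boundary (Andr\'{e} 2.5.1). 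Note also the paper's reduction to the case where $E$ contains an imaginary quadratic field in which $p$ splits, which you omit but which is needed for this moduli-theoretic argument. Without some substitute for Lemma \ref{g12}, your construction of the connecting curve does not go through.
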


The proof will occupy the remainder of this subsection. In outline, it follows
the proof of \cite{deligne1982}, Theorem 4.8, but requires a delicate
reduction argument of Andr\'{e}.

\begin{lemma}
\label{g11}Let $(A,\nu)$ be an abelian variety over $\mathbb{\mathbb{Q}{}%
}^{\mathrm{al}}$ of split Weil type relative to $E$. Then there exists a
connected smooth variety $S$ over $\mathbb{C}{}$, an abelian scheme $f\colon
X\rightarrow S$ over $S$, and an action $\nu$ of $E$ on $X/S$ such that

\begin{enumerate}
\item for some $s_{1}\in S(\mathbb{C}{})$, $(X_{s_{1}},\nu_{s_{1}}%
)\approx(A,\nu)_{\mathbb{C}{}};$

\item for all $s\in S(\mathbb{C}{})$, $(X_{s},\nu_{s})$ is of split Weil type
relative to $E$;

\item for some $s_{2}\in S(\mathbb{C}{})$, $X_{s_{2}}$ is of the form
$B\otimes_{\mathbb{Q}{}}E$ with $e\in E$ acting as $\id\otimes e$.
\end{enumerate}
\end{lemma}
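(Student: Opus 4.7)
The plan is to exhibit $(A,\nu)$ and a specific split-Weil-type abelian variety of the form $(B\otimes_{\mathbb{Q}}E,\id\otimes e)$ as two $\mathbb{C}$-points of a connected, smooth moduli space of polarized abelian varieties of split Weil type relative to $E$, and then take $S$ to be that moduli space (or a connected smooth subvariety of it).

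First, I would set up the period domain. Let $V=H_{1}(A,\mathbb{Q})$, regarded as an $E$-vector space equipped with the $E/\mathbb{Q}$-skew-Hermitian form $\phi$ coming from a polarization on $A$. The hypothesis that $(A,\nu)$ is of split Weil type says that $\phi$ is hyperbolic over $E$. Let $G=\GU_{E}(V,\phi)$ and let $X$ be the associated Hermitian symmetric domain, parameterizing the complex structures on $V_{\mathbb{R}}$ that commute with $E$, are polarized by $\phi$, and are of Weil type. Because $\phi$ is split, the real form $G_{\mathbb{R}}$ is a product $\prod_{v}U(n/2,n/2)$ over the complex places of $E$, so $X$ is a connected bounded symmetric domain (its compact dual is a product of Lagrangian Grassmannians). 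Passing to a suitable neat arithmetic quotient $\Gamma\backslash X$ yields a smooth connected complex variety $S_{0}$ carrying a universal polarized abelian scheme $X_{0}\to S_{0}$ with $E$-action whose fibres are the polarized abelian varieties of split Weil type relative to $E$ with the chosen invariants (plus level structure). The given $A$ corresponds to a $\mathbb{C}$-point $s_{1}$ of $S_{0}$.

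Next, I would construct the distinguished point $s_{2}$. Take $B$ to be any polarized abelian variety over $\mathbb{C}$ of dimension $\dim A/[E:\mathbb{Q}]$ with the right numerical invariants, and form $B\otimes_{\mathbb{Q}}E:=B^{[E:\mathbb{Q}]}$ with $E$ acting by its regular representation and polarized by combining a polarization of $B$ with the trace pairing on $E$. A direct check from $H_{1}(B\otimes_{\mathbb{Q}}E,\mathbb{Q})=H_{1}(B,\mathbb{Q})\otimes_{\mathbb{Q}}E$ shows that $B\otimes_{\mathbb{Q}}E$ is of split Weil type relative to $E$ with the same invariants as $(A,\nu)$, and hence defines a $\mathbb{C}$-point $s_{2}$ of $S_{0}$. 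Since $S_{0}$ is smooth and irreducible, we may take $S=S_{0}$ (or, if preferred, any smooth connected subvariety of $S_{0}$ containing $s_{1}$ and $s_{2}$, e.g.\ a curve obtained by iterated Bertini); pulling back $X_{0}$ gives the desired family $(f:X\to S,\nu)$ satisfying (a), (b), and (c).

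The principal obstacle lies in the first paragraph: ensuring that the period domain of split Weil type Hodge structures is connected, so that $A$ and $B\otimes_{\mathbb{Q}}E$ can in fact be joined by a family of split Weil type abelian varieties. This reduces to the connectedness of the Hermitian symmetric domain attached to the split form of $\GU_{E}(V,\phi)$, a standard but essential input. A secondary issue is the passage from the analytic period domain to an algebraic moduli space carrying a \emph{universal} family, handled by imposing sufficient level structure. I expect the argument to follow the outline of \cite{deligne1982}, \S4, adapted to work with polarized abelian schemes of split Weil type as in HAV.
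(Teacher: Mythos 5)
Your proposal is correct and follows essentially the same route as the paper, which simply refers to the proof of \cite{deligne1982}, 4.8: there the family is obtained exactly as you describe, by realizing $(A,\nu)$ and $B\otimes_{\mathbb{Q}}E$ as points of a connected moduli space of polarized abelian varieties of split Weil type relative to $E$, connectedness coming from that of the symmetric domain attached to the hyperbolic unitary group. The points you flag (connectedness of the domain, matching the invariants of $B\otimes_{\mathbb{Q}}E$ with those of $A$, and rigidifying by a level structure to get a universal algebraic family) are precisely the ones verified in Deligne's argument.
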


\begin{proof}
See the proof of \cite{deligne1982}, 4.8.
\end{proof}

We shall need to use additional properties of the family $X\rightarrow S$
constructed by Deligne. For example, there is a local subsystem $W_{E}(X/S)$
of $R^{2r}f_{\ast}\mathbb{Q}{}$ such that $W_{E}(X/S)_{s}=W_{E}(X_{s})$ for
all $s\in S(\mathbb{C}{})$. Also, the variety $B$ in (c) can be chosen to be a
power of CM elliptic curve (so $X_{s_{2}}$ is isogenous to a power of a CM
elliptic curve).

The variety $S$ has a unique model over $\mathbb{Q}{}^{\mathrm{al}}$ with the
property that every CM-point $s\in S(\mathbb{C}{})$ lies in $S(\mathbb{Q}%
{}^{\mathrm{al}})$. This follows from the general theory of Shimura varieties;
or from the general theory of locally symmetric varieties (Faltings, Peters);
or (best) from descent theory (\cite{milne1999d}, 2.3) using that $S$ is a
moduli variety over $\mathbb{C}{}$ and that the moduli problem is defined over
$\mathbb{Q}{}^{\mathrm{al}}$. The morphism $f$ is also defined over
$\mathbb{Q}{}^{\mathrm{al}}$, and we will now simply write $f\colon
X\rightarrow S$ for the family over $\mathbb{Q}{}^{\mathrm{al}}$. There is a
$\mathbb{Q}{}$-local subsystem $W_{E}(X/S)$ of $R^{2r}f_{\ast}\mathbb{Q}%
{}_{\ell}$ such that $W_{E}(X/S)_{s}=W_{E}(X_{s})$ for all $s\in
S(\mathbb{Q}{}^{\mathrm{al}})$. The points $s_{1}$ and $s_{2}$ lie in
$S(\mathbb{Q}{}^{\mathrm{al}})$.

We now assume that $E$ contains an imaginary quadratic field in which the
prime $p$ splits --- this is the only case we shall need, and it implies the
general case.

The family $X\rightarrow S$ (without the action of $E$) defines a morphism
from $S$ into a moduli variety $M$ over $\mathbb{Q}{}^{\mathrm{al}}$ for
polarized abelian varieties with certain level structures. Let $\mathcal{M}{}$
be the corresponding moduli scheme over $\mathcal{O}{}_{w}$ and $\mathcal{M}%
{}^{\ast}$ its minimal compactification (\cite{chaiF1990}). Let $\mathcal{S}%
{}^{\ast}$ be the closure of $S$ in $\mathcal{M}{}^{\ast}$.

\begin{lemma}
\label{g12}The complement of $\mathcal{S}_{\mathbb{F}{}}^{\ast}\cap
\mathcal{M}{}_{\mathbb{F}{}}$ in $\mathcal{S}{}_{\mathbb{F}{}}^{\ast}$ has
codimension at least two.
\end{lemma}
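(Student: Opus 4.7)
The plan is to identify $\mathcal{S}^{*}$ with a flat integral model of the Baily--Borel compactification of the Shimura variety $S$ and then transport the codimension bound on the boundary from characteristic zero to the special fibre via flatness.

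Deligne's construction in \ref{g11} exhibits $S$ as (a component of) a PEL Shimura variety for the unitary similitude group $\GU(V)$, where $V = H^{1}(X_{s_{1}},\mathbb{Q})$ is a Hermitian $E$-module of split Weil signature. The associated Hermitian symmetric domain has a tube-type factor of rank $n\ge 2$ -- the codimension-one case $n=1$ being trivial, as Weil classes of codimension one are algebraic by Lefschetz $(1,1)$ -- and in particular has no $\SL_{2}(\mathbb{R})$-factor. The rational maximal parabolics of $\GU(V)$ correspond to totally isotropic $E$-subspaces of $V$, and a $k$-dimensional such subspace gives rise to a Baily--Borel boundary component of complex dimension $(n-k)^{2}$, so the minimum codimension of a boundary component of $S^{\mathrm{BB}}$ is $2n-1\ge 3$.

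The closed immersion $S\hookrightarrow M$ extends by functoriality of Baily--Borel compactifications to a finite morphism $S^{\mathrm{BB}}\to M^{*}$ whose image is the Zariski closure of $S$ in $M^{*}$, so $\mathcal{S}^{*}_{\mathbb{Q}^{\mathrm{al}}}$ is canonically $S^{\mathrm{BB}}$. Since $p$ splits in the imaginary quadratic subfield of $E$, $\GU(V)_{\mathbb{Q}_{p}}$ is unramified and carries a hyperspecial vertex; the PEL moduli description then gives a smooth integral model $\mathcal{S}$ of $S$ over $\mathcal{O}_{w}$ as a closed subscheme of $\mathcal{M}$. Invoking the integral minimal compactification theory of \cite{chaiF1990} (extended to this PEL setting by Lan), there is a flat $\mathcal{O}_{w}$-model $\overline{\mathcal{S}}$ of $S^{\mathrm{BB}}$ whose boundary strata are themselves flat integral models of the BB boundary components; universality identifies $\overline{\mathcal{S}}$ with the scheme-theoretic closure of $\mathcal{S}$ in $\mathcal{M}^{*}$, i.e., with $\mathcal{S}^{*}$. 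In particular $\mathcal{S}^{*}\cap(\mathcal{M}^{*}\setminus\mathcal{M})$ is flat over $\mathcal{O}_{w}$ of relative codimension at least two, and its special fibre -- which is the object of the lemma -- therefore has codimension at least two in $\mathcal{S}^{*}_{\mathbb{F}}$.

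The main obstacle is establishing the flatness of the boundary subscheme $\mathcal{S}^{*}\cap(\mathcal{M}^{*}\setminus\mathcal{M})$ over $\mathcal{O}_{w}$. Scheme-theoretic intersections of flat subschemes are not automatically flat, so a naive dimension count is insufficient; instead one must use that the integral compactification $\overline{\mathcal{S}}$ of the smooth PEL model $\mathcal{S}$ has a well-behaved stratified structure extending the Baily--Borel stratification of the generic fibre. This is where the splitting hypothesis on $p$ is essential (giving hyperspecial level and smooth reduction at $w$) and where one must invoke the full integral compactification theory rather than the easier analytic construction available in characteristic zero.
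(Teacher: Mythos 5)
The paper offers no argument for this lemma at all---its ``proof'' is the single line ``See Andr\'e 2.4.2''---so there is no internal proof to compare yours against step by step. Your overall strategy (bound the codimension of the Baily--Borel boundary of $S$ in characteristic zero by the parabolic/isotropic-subspace combinatorics of the unitary group, then transport that bound to the special fibre through a flat, stratified integral model of the minimal compactification) is the natural one and is close in spirit to what Andr\'e's cited lemma accomplishes. You have also correctly located the real difficulty: the issue is not the codimension of $\partial\mathcal{M}^{\ast}$ in $\mathcal{M}^{\ast}$, but the possible presence of large vertical components of $\mathcal{S}^{\ast}\cap\partial\mathcal{M}^{\ast}$ over the closed point of $\mathcal{O}_{w}$.

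The gap is that, having located the difficulty, you discharge it by assertion. The sentence ``universality identifies $\overline{\mathcal{S}}$ with the scheme-theoretic closure of $\mathcal{S}$ in $\mathcal{M}^{\ast}$'' is not a proof: minimal compactifications satisfy no universal property, and the identification of the integral minimal compactification of a PEL model with the closure (in fact, in general only with the \emph{normalization} of the closure, which is harmless for codimension but must be said) of the Shimura subvariety inside the Siegel minimal compactification is itself a nontrivial comparison theorem --- and proving that the boundary of that closure has no large vertical fibre components is precisely the content of the lemma, not an input to it. To close the gap you must do one of two things: either carefully invoke the comparison between the two compactifications together with the flatness of the PEL boundary strata over $\mathcal{O}_{w}$, or argue directly with the Faltings--Chai degeneration data along $\partial\mathcal{M}^{\ast}$ that any $\mathbb{F}$-point of $\mathcal{S}^{\ast}$ lying on the stratum $\mathcal{A}_{g'}$ corresponds to a semi-abelian scheme whose toric character group and abelian part inherit the $E$-action, so that such points lie in the (flat, small-dimensional) image of a boundary stratum of the smaller unitary type; either route gives $\dim\bigl(\mathcal{S}^{\ast}_{\mathbb{F}}\setminus\mathcal{M}_{\mathbb{F}}\bigr)\leq\dim S-2$, which combined with the equidimensionality of $\mathcal{S}^{\ast}_{\mathbb{F}}$ (automatic from flatness of the closure over the discrete valuation ring $\mathcal{O}_{w}$) yields the lemma. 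Your peripheral claims --- the codimension count $2n-1$ for the $\GU(n,n)$ boundary components, the reduction of the rank-one case to divisor classes, and the role of the splitting of $p$ in the imaginary quadratic subfield --- are reasonable, but as written the proposal replaces the one genuinely hard step by the word ``universality.''
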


\begin{proof}
See Andr\'{e} 2.4.2.
\end{proof}

Recall that $s_{1}$ and $s_{2}$ are points in $S(\mathbb{Q}{}^{\mathrm{al}})$
such that $X_{s_{1}}=A$ and $X_{s_{2}}$ is a power of a CM-elliptic curve. As
$A$ and the elliptic curve have good reduction, the points extend to points
$\mss_{1}$ and $\mss_{2}$ of $\mathcal{S}{}^{\ast}\cap\mathcal{\mathcal{M}{}}%
$. Let $\mathcal{\bar{S}}$ denote the blow-up of $\mathcal{S}{}^{\ast}$
centred at the closed subscheme defined by the image of $\mss_{1}$ and
$\mss_{2}$, and let $\mathcal{S}{}$ be the open subscheme obtained by removing
the strict transform of the boundary $\mathcal{S}{}^{\ast}\smallsetminus
(\mathcal{S}{}^{\ast}\cap\mathcal{M}{})$. It follows from \ref{g12} that
$\mathcal{S}_{\mathbb{F}{}}$ is connected, and that any sufficiently general
linear section of relative dimension $\dim(S)-1$ in a projective embedding
$\mathcal{\bar{S}}\hookrightarrow\mathbb{P}{}_{\mathcal{O}{}_{w}}^{N}{}$ is a
projective flat $\mathcal{O}{}_{w}$-curve $\mathcal{C}{}$ contained in
$\mathcal{S}{}$ with smooth geometrically connected generic fibre (Andr\'{e}
2.5.1). Consider $\left(  \mathcal{X}|\mathcal{C}{}\right)  _{\mathbb{F}{}%
}\rightarrow\mathcal{C}_{\mathbb{F}{}}$. After replacing $\mathcal{C}%
{}_{\mathbb{F}{}}$ by its normalization and pulling back $\left(
\mathcal{X}|\mathcal{C}{}\right)  _{\mathbb{F}{}}$, we are in the situation of
Proposition \ref{g13}. The class $t_{s_{2}}$ is algebraic because the Hodge
conjecture holds for powers of elliptic curves (the $\mathbb{Q}{}$-algebra of
Hodge classes is generated by divisor classes). Hence $(t_{s_{2}\ell})_{0}$ is
algebraic, and \ref{g13} shows that $(t_{s_{1}\ell})_{0}$ is algebraic. This
completes the proof of Proposition \ref{g8}.

\subsection{Absolute Hodge classes on abelian varieties}

Again, $w$ is a prime of $\mathbb{Q}{}^{\mathrm{al}}$ lying over $p$ and
$\ell$ is a prime number $\neq p$.

\begin{theorem}
\label{g43}Assume that the Lefschetz standard conjecture holds for algebraic
varieties over $\mathbb{F}{}$. Let $A$ be an abelian variety over
$\mathbb{Q}{}^{\mathrm{al}}$ with good reduction at $w$ to an abelian variety
$A_{0}$ over $\mathbb{F}{}$, and let $t$ be an absolute Hodge class on $A$.
The class $(t_{\ell})_{0}$ on $A_{0}$ is algebraic.
\end{theorem}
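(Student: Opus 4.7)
The plan is to reduce Theorem \ref{g43} to Proposition \ref{g8} by invoking Deligne's theorem on absolute Hodge classes on abelian varieties, and then to mimic the geometric transport argument used in the proof of \ref{g8}. First, I would use Deligne's argument (\cite{deligne1982}, main theorem; cf. HAV) to construct, starting from $(A,t)$, a connected smooth $\mathbb{Q}^{\mathrm{al}}$-variety $S$, an abelian scheme $f\colon X\to S$, and a flat section $\tilde{t}$ of $R^{2r}f_{\ast}\mathbb{A}(r)$ which is absolute Hodge in every fibre, together with two CM points $s_{1},s_{2}\in S(\mathbb{Q}^{\mathrm{al}})$ such that $(X_{s_{1}},\tilde{t}_{s_{1}})\approx(A,t)$ and such that $X_{s_{2}}$ is of split Weil type relative to some CM field $E$ with $\tilde{t}_{s_{2}}\in W_{E}(X_{s_{2}})$. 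This is essentially the family used in Deligne's proof of principle B reducing general Hodge classes on abelian varieties to Weil classes.

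Next, I would apply Proposition \ref{g8} directly to $(X_{s_{2}},\tilde{t}_{s_{2}})$: since $X_{s_{2}}$ is CM it has good reduction at $w$, and \ref{g8} gives that $(\tilde{t}_{s_{2},\ell})_{0}$ is algebraic on the special fibre $X_{s_{2},0}$. It then remains to transport algebraicity along the family from $s_{2}$ to $s_{1}$. I would proceed exactly as in the proof of \ref{g8}: spread $f$ out to an abelian scheme $\mathcal{X}\to\mathcal{S}^{\ast}$ over $\mathcal{O}_{w}$ inside a moduli-theoretic compactification, extend $s_{1}$ and $s_{2}$ to $\mathbb{F}$-points $\bar{s}_{1},\bar{s}_{2}$ of the blowup $\bar{\mathcal{S}}$ centred at their integral closures, and invoke Andr\'{e}'s codimension estimate \ref{g12} together with a Bertini argument in a projective embedding to produce a projective flat $\mathcal{O}_{w}$-curve $\mathcal{C}\subset\bar{\mathcal{S}}$ whose special fibre is smooth, geometrically connected, and contains both $\bar{s}_{1}$ and $\bar{s}_{2}$.

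Restricting $\mathcal{X}$ and $\tilde{t}_{\ell}$ to $\mathcal{C}_{\mathbb{F}}$ (after normalization if necessary) puts us in the setting of Proposition \ref{g13}: the algebraicity hypothesis at $\bar{s}_{2}$ is supplied by the application of \ref{g8} above, and the Lefschetz-standard-conjecture hypothesis is our standing assumption. Applying \ref{g13} propagates algebraicity from $\bar{s}_{2}$ to $\bar{s}_{1}$, which is exactly the statement that $(t_{\ell})_{0}$ is algebraic on $A_{0}$. The main obstacle is geometric rather than conjectural: one must ensure that the Deligne family spreads out compatibly over $\mathcal{O}_{w}$ with the CM points $\bar{s}_{1},\bar{s}_{2}$ both landing in the interior of the compactified base, and that the transporting curve $\mathcal{C}$ exists with the required properties. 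Both of these are handled by the moduli-theoretic construction and the codimension bound \ref{g12} used in the proof of \ref{g8}, so the overall argument is essentially a rerun of \ref{g8} with the appeal to the Hodge conjecture for powers of CM elliptic curves replaced by the now-available conclusion of \ref{g8} itself.
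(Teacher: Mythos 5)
There is a genuine gap at the very first step. The family you invoke --- a connected base $S$ carrying an abelian scheme $X\to S$ and a flat absolute Hodge section $\tilde t$ with $(X_{s_{1}},\tilde t_{s_{1}})\approx(A,t)$ while $X_{s_{2}}$ is of split Weil type and $\tilde t_{s_{2}}$ is a Weil class --- does not exist in general, and it is not what Deligne constructs. Deligne's 6.1 (Principle B) only deforms $(A,t)$ to a point whose fibre is of \emph{CM type}; the passage from Hodge classes on CM abelian varieties to Weil classes is not a deformation at all but a linear-algebra decomposition $t=\sum f_{\Delta}^{\ast}(t_{\Delta})$, where the $t_{\Delta}$ are Weil classes on \emph{auxiliary} abelian varieties $A_{\Delta}=\prod_{s\in\Delta}A\otimes_{E,s}F$ of split Weil type and the $f_{\Delta}\colon A\to A_{\Delta}$ are algebraic maps (Andr\'{e}; HAV, Theorem 1). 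If a single family of the kind you describe existed, Deligne's Main Theorem 2.11 would follow far more easily than it does. The paper's treatment of the CM case is precisely this: apply Andr\'{e}'s decomposition and then Proposition \ref{g8} to each $A_{\Delta}$, using that algebraicity is preserved under the algebraic pullback $f_{\Delta}^{\ast}$, rather than deforming $t$ into a Weil class.

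For the general (non-CM) case your transport strategy meets a second obstacle: to rerun the argument of \ref{g8} over the Shimura variety of Deligne's 6.1 you would need Andr\'{e}'s codimension-two estimate (Lemma \ref{g12}) for the closure of \emph{that} base in the minimal compactification, which the paper only invokes for the specific split-Weil-type family; you cannot simply quote it for an arbitrary Shimura subvariety. The paper avoids any transport in the special fibre here: by Kisin--Vasiu, the point $s\in S(\mathbb{Q}^{\mathrm{al}})$ with $(X,\gamma)_{s}=(A,t)$ admits a CM point $s'\in S(\mathbb{Q}^{\mathrm{al}})$ with the \emph{same reduction} $(s')_{0}=s_{0}$, so $X_{s'}$ is a CM abelian variety reducing to $A_{0}$ and $(t_{s'\ell})_{0}=(t_{s\ell})_{0}$; the already-proved CM case then finishes the argument. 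You should restructure your proof along these two reductions (Andr\'{e}'s decomposition for the CM case, Kisin--Vasiu CM lifting for the general case) rather than attempting a single deformation-and-transport.
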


\begin{proof}
We first assume that $A$ is CM, say, of type $(E,\Phi)$. Let $F$ be a
CM-subfield of $\mathbb{C}{}$, finite and Galois over $\mathbb{Q}{}$, that
splits $E$. We may suppose that $F$ contains an imaginary quadratic field in
which $p$ splits.

For each subset $\Delta~$of $\Hom(E,F)$ such that $|t\Delta\cap\Phi
|=r=|t\Delta\cap\bar{\Phi}|$ for all $t\in\Gal(F/\mathbb{Q}{})$, we let
$A_{\Delta}=\prod\nolimits_{s\in\Delta}A\otimes_{E,s}F$. There is an obvious
homomorphism $f_{\Delta}\colon A\rightarrow A_{\Delta}$. The abelian variety
$A_{\Delta}$ is of split Weil type, and every absolute Hodge class $t$ on $A$
can be written as a sum $t=\sum f_{\Delta}^{\ast}(t_{\Delta})$ with
$t_{\Delta}$ a Weil class on $A_{\Delta}$ (\cite{andre1992}; HAV, Theorem 1).
Thus the theorem in this case follows from Proposition \ref{g8}.

We now consider the general case. There exists an abelian scheme $f\colon
X{}\rightarrow S$ over $\mathbb{\mathbb{C}{}}$ with $S$ a connected Shimura
variety, and a section $\gamma$ of $R^{2r}f_{\ast}\mathbb{A}{}$ such that
$(X,\gamma)_{s}=(A,t)$ (\cite{deligne1982}, 6.1). As before, we may suppose
that $f$ is defined over $\mathbb{Q}{}^{\mathrm{al}}$ and that $s\in
S(\mathbb{Q}{}^{\mathrm{al}})$. There exists a point $s^{\prime}\in
S(\mathbb{Q}{}^{\mathrm{al}})$ such that $(s^{\prime})_{0}=s_{0}$ in
$S_{0}(\mathbb{F}{})$ and $X_{s^{\prime}}$ is a CM abelian variety (Kisin,
Vasiu). Now the theorem for $X_{s^{\prime}}$ implies that $(t_{s\ell})_{0}$ is algebraic.
\end{proof}

\section{Proof of Theorem 2.}

Fix an algebraic closure $\mathbb{F}{}$ of $\mathbb{F}{}_{p}$, and let
$\mathbb{F}{}_{q}$ be the subfield of $\mathbb{F}{}$ with $q$ elements.

\begin{plain}
\label{g17}Let $X$ be an algebraic variety over $\mathbb{F}{}_{q}$. For
$\ell\neq p$, the Tate conjecture $T(X,\ell)$ states that the $\mathbb{Q}%
{}_{\ell}$-vector space $H_{\ell}^{2\ast}(X)(\ast)^{\Gal(\mathbb{F}%
{}/\mathbb{F}{}_{q})}$ is spanned by algebraic classes, and the conjecture
$S(X,\ell)$ states that the obvious map $H_{\ell}^{2\ast}(X)(\ast
)^{\Gal(\mathbb{F}{}/\mathbb{F}{}_{q})}\rightarrow H_{\ell}^{2\ast}%
(X)(\ast)_{\Gal(\mathbb{F}/\mathbb{F}{}_{q})}$ is an isomorphism. The full
Tate conjecture $T(X)$ states that, for all $r$, the pole of the zeta function
$Z(X,t)$ at $t=q^{-r}$ is equal to the rank of the group of numerical
equivalence classes of algebraic cycles on $X$ of codimension $r$. It is known
(folklore) that, if $T(X,\ell)$ and $S(X,\ell)$ hold for a single $\ell$, then
the full Tate conjecture $T(X)$ holds, in which case $T(X,\ell)$ and
$S(X,\ell)$ hold for all $\ell$. See \cite{tate1994}.

We say that one of these conjectures holds for an algebraic variety $X$ over
$\mathbb{F}{}$ if it holds for all models of $X$ over finite subfields of
$\mathbb{F}{}$ (it suffices to check that it holds for some model over a
sufficiently large subfield).
\end{plain}

\begin{theorem}
\label{g40}Assume that the Lefschetz standard conjecture holds for algebraic
varieties over $\mathbb{F}{}$ and $\ell$-adic \'{e}tale cohomology (some
$\ell\neq p$). Then the full Tate conjecture holds for abelian varieties over
finite fields of characteristic $p$.
\end{theorem}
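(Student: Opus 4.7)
The plan is to decompose the full Tate conjecture, as formulated in \ref{g17}, into its two classical components $T(A_0,\ell)$ (spanning) and $S(A_0,\ell)$ (semisimplicity of Frobenius). Semisimplicity is a theorem of Tate; for the spanning statement I would lift $A_0$ to a CM abelian variety in characteristic zero and apply Theorem \ref{g43}.

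First I would dispose of $S(A_0,\ell)$ by invoking Tate's theorem that Frobenius acts semisimply on $H^{1}_{\ell}(A_0,\mathbb{Q}_{\ell})$ for any abelian variety $A_0$ over a finite field. Since $H^{i}_{\ell}(A_0,\mathbb{Q}_{\ell})\simeq\bigwedge^{i}H^{1}_{\ell}(A_0,\mathbb{Q}_{\ell})$ and semisimplicity of a linear endomorphism is preserved under exterior powers in characteristic zero, Frobenius acts semisimply on every $H^{i}_{\ell}(A_0)$, so the natural map from invariants to coinvariants is an isomorphism.

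For $T(A_0,\ell)$, I would use the Honda--Tate theorem together with Serre--Tate deformation to replace $A_0$, up to isogeny and after a finite base extension of $\mathbb{F}_{q}$, by the reduction at $w$ of a CM abelian variety $A$ defined over $\mathbb{Q}^{\mathrm{al}}$; both $T(-,\ell)$ and the algebraicity of individual classes are isogeny invariants, so this is harmless. For a CM abelian variety $A$ the Mumford--Tate conjecture is classical (Pohlmann; Shimura--Taniyama): the connected $\ell$-adic algebraic monodromy group of the Galois action on $H^{1}_{\ell}(A_0,\mathbb{Q}_{\ell})$ coincides with $\MT(A)_{\mathbb{Q}_{\ell}}$. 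Consequently, after replacing $\mathbb{F}_{q}$ by a sufficiently large finite subfield of $\mathbb{F}$, the space of Tate classes in $H^{2r}_{\ell}(A_0,\mathbb{Q}_{\ell}(r))$ coincides with $B^{r}(A)\otimes_{\mathbb{Q}}\mathbb{Q}_{\ell}$, where $B^{r}(A)$ is the space of absolute Hodge classes introduced in \ref{g2} (recall that for abelian varieties Hodge equals absolute Hodge). Theorem \ref{g43} then asserts that every element of $B^{r}(A)$ reduces to an algebraic class on $A_0$, so every Tate class on $A_0$ is algebraic.

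Combining the two components yields the full Tate conjecture $T(A_0)$ by the folklore implication recalled in \ref{g17}. The main obstacle is the identification, in the CM case, of Tate classes with $B^{r}(A)\otimes_{\mathbb{Q}}\mathbb{Q}_{\ell}$: one must arrange that the base field of $A_0$ is large enough that the Zariski closure of Frobenius fills out the whole Mumford--Tate torus rather than a proper subtorus, and one must check that the choice of CM lift provided by Honda--Tate is compatible with this requirement. Once this is secured, the transfer of algebraicity from $A$ to $A_0$ furnished by Theorem \ref{g43} closes the argument.
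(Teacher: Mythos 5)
Your overall strategy (reduce to the CM case by lifting, transfer algebraicity through Theorem \ref{g43}, and handle semisimplicity separately so that the folklore implication of \ref{g17} upgrades $T(A_0,\ell)$ and $S(A_0,\ell)$ to the full conjecture) is in the right spirit, and the semisimplicity step is fine --- though the semisimplicity of Frobenius on $H^{1}_{\ell}$ of an abelian variety over a finite field is due to Weil (1948) rather than Tate, and the exterior-power argument you give is exactly how the paper uses it. The gap is in the step you yourself flag as ``the main obstacle'': the asserted identification of the Tate classes in $H^{2r}_{\ell}(A_0,\mathbb{Q}_{\ell}(r))$ with $B^{r}(A)\otimes_{\mathbb{Q}}\mathbb{Q}_{\ell}$ for a CM lift $A$ is \emph{false} in general, and it cannot be repaired by enlarging the base field. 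The space of Tate classes is the space of invariants of the Frobenius torus $P(A_0)$ (the Zariski closure of the group generated by a Frobenius element), and while $P(A_0)$ is contained in $\MT(A)_{\mathbb{Q}_{\ell}}$ for a CM lift, this containment can be strict for \emph{every} CM lift: $P(A_0)$ depends only on the isogeny class of $A_0$ over a sufficiently large finite field, so no further base extension changes it. The standard example is a supersingular abelian variety, where $P(A_0)$ is just the one-dimensional weight torus, so that \emph{all} of $H^{2r}_{\ell}(A_0)(r)$ consists of Tate classes, far exceeding the reduction of the Hodge classes of any CM lift. These ``exotic'' Tate classes are precisely what your argument does not reach, since Theorem \ref{g43} only produces algebraic classes in the image of the specialization map from $B^{r}(A)$.

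Closing this gap is the entire content of the result the paper actually invokes: Milne's 1999 theorem that the Hodge conjecture for CM abelian varieties over $\mathbb{C}$ implies the Tate conjecture for abelian varieties over $\mathbb{F}$. That proof requires a substantial further argument (comparing Lefschetz, Mumford--Tate, and Frobenius groups across the whole isogeny category of CM abelian varieties, not a single lift) to show that the exotic Tate classes are nevertheless accounted for. The paper's proof of Theorem \ref{g40} consists of citing that theorem and observing that its proof never uses that the Hodge classes are algebraic in characteristic zero, only that they become algebraic modulo $p$ --- which is what Theorem \ref{g43} supplies. So your proposal correctly identifies the two inputs (\ref{g43} and semisimplicity) but is missing the third, and hardest, ingredient; as written, it proves only that the reductions of Hodge classes are algebraic, not that they span the Tate classes.
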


\begin{proof}
In \cite{milne1999lm}, the Tate conjecture for abelian varieties over
$\mathbb{F}{}$ is shown to follow from the Hodge conjecture for CM abelian
varieties over $\mathbb{C}{}$. However, the proof does not use that the Hodge
classes are algebraic, but only that they become algebraic modulo $p$. Hence
we can deduce from Proposition \ref{g43} that the Tate conjecture holds for
abelian varieties over $\mathbb{F}{}$ and some $\ell$. As the Frobenius map
acts semisimply on the cohomology of abelian varieties (Weil 1948), this
implies that the full Tate conjecture holds for abelian varieties over
$\mathbb{F}{}$.
\end{proof}

\begin{theorem}
\label{g41}Assume that the Lefschetz standard conjecture holds for algebraic
varieties over $\mathbb{F}{}$ and $\ell$-adic \'{e}tale cohomology (some
$\ell\neq p$). Then Grothendieck's standard conjecture of Hodge type holds for
abelian varieties over fields of characteristic $p$ and the classical Weil
cohomology theories.
\end{theorem}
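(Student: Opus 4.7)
The plan is to reduce the Hodge standard conjecture for abelian varieties in characteristic $p$ to the classical Hodge--Riemann bilinear relations in characteristic zero, by showing that the primitive algebraic classes on an abelian variety over $\mathbb{F}{}$ are specializations of primitive Hodge classes on a suitable CM lift.

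First I would reduce to the case of an abelian variety $A_{0}$ over $\mathbb{F}{}$. A standard specialization argument handles this: an abelian variety $A$ over any algebraically closed field $k$ of characteristic $p$ descends to a smooth polarized family over a finitely generated base, and one may specialize to a closed point to obtain an abelian variety over $\mathbb{F}{}$. Since this specialization map is injective on algebraic classes and commutes with the Lefschetz operator and cup product, the Hodge standard conjecture on the specialization implies it on $A$.

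Next I would lift $A_{0}$ to characteristic zero using the Kisin--Vasiu CM-lifting theorem already invoked in the proof of Theorem \ref{g43}: there is a CM abelian variety $A$ over $\mathbb{Q}{}^{\mathrm{al}}$, polarized by a lift of a polarization of $A_{0}$, whose reduction at $w$ is isogenous to $A_{0}$. By Theorem \ref{g43}, every absolute Hodge class on $A$ specializes to an algebraic class on $A_{0}$, yielding a specialization map $B^{r}(A)\to a\!H^{2r}(A_{0},\mathbb{Q}{}_{\ell})$ that intertwines the two Lefschetz operators and respects cup products. The crucial step is surjectivity of this map: by Theorem \ref{g40} and the semisimplicity of Frobenius, the target equals the space of Tate classes on $A_{0}$, while the source, for a CM abelian variety, is cut out by the commutative Mumford--Tate torus; choosing a model of $A_{0}$ over a sufficiently large finite field so that the Frobenius generates a Zariski-dense subgroup of the corresponding torus and comparing ranks (as in HAV) forces the specialization to be surjective.

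Once surjectivity is in hand, the transfer of positivity is automatic. The Hodge--Riemann bilinear relations applied to $A_{\mathbb{C}{}}$ give positive definiteness of the primitive form $(x,y)\mapsto(-1)^{i}\langle L^{d-2i}x,y\rangle$ on primitive Hodge classes of codimension $i$, and specialization respects primitivity, the Lefschetz operator and the cup product pairing, so this positivity descends to the primitive algebraic classes on $A_{0}$. The main obstacle is the surjectivity of $B^{r}(A)\to a\!H^{2r}(A_{0},\mathbb{Q}{}_{\ell})$: Theorem \ref{g43} only places the image inside the algebraic classes, and promoting this inclusion to an equality rests essentially on the dimension count provided by Theorem \ref{g40} combined with the rigidity of the CM Mumford--Tate group.
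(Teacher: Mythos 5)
Your strategy diverges from the paper's: the proof of Theorem \ref{g41} in the text simply invokes \cite{milne2002p}, where the Hodge standard conjecture for abelian varieties in characteristic $p$ is deduced from the Hodge conjecture for CM abelian varieties over $\mathbb{C}{}$, and then observes that the argument there uses only that the Hodge classes become algebraic modulo $p$ --- which is exactly what Theorem \ref{g43} supplies. You instead attempt a direct specialization argument, and its pivotal step fails.

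The step that fails is the asserted surjectivity of the specialization map $B^{r}(A)\rightarrow a\!H^{2r}(A_{0},\mathbb{Q}{}_{\ell})$ for a CM lift $A$ of $A_{0}$. Take $E_{0}$ a supersingular elliptic curve over $\mathbb{F}{}$ and $A_{0}=E_{0}\times E_{0}$. Since $\End(E_{0})\otimes\mathbb{Q}{}$ is a quaternion algebra over $\mathbb{Q}{}$, the space of divisor classes $a\!H^{2}(A_{0},\mathbb{Q}{}_{\ell}(1))$ has dimension $6$, whereas any abelian surface $A$ in characteristic zero has $\dim B^{1}(A)\leq h^{1,1}=4$. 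Hence no lift whatsoever --- CM or not, and for any choice of model over a finite field --- can hit all the algebraic classes on $A_{0}$. These ``exotic'' Tate classes are precisely why the Tate conjecture over $\mathbb{F}{}$ is not a formal consequence of the Hodge conjecture by lifting, and why \cite{milne1999lm} and \cite{milne2002p} require the apparatus of Lefschetz and CM motives: the positivity on the exotic part cannot be obtained by specializing the Hodge--Riemann relations from a single characteristic-zero lift. Your appeal to Theorem \ref{g40} only identifies the target with the full space of Tate classes, which makes it strictly larger than the source in such examples, so the rank comparison you propose establishes the opposite of what you need. The preliminary reduction to $\mathbb{F}{}$ and the transfer of positivity along a surjection compatible with $L$ and cup product are fine in themselves; the argument founders solely on this surjectivity, and repairing it amounts to redoing the substantial work of \cite{milne2002p}.
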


\begin{proof}
In \cite{milne2002p} the Hodge standard conjecture for abelian varieties in
characteristic $p$ is shown to follow from the Hodge conjecture for CM abelian
varieties over $\mathbb{C}{}$. Again, the proof uses only that the Hodge
classes become algebraic modulo $p$, and so the theorem follows from
Proposition \ref{g43}.
\end{proof}

\begin{corollary}
\label{g42}Assume that the Lefschetz standard conjecture holds for algebraic
varieties over $\mathbb{F}{}$ and $\ell$-adic \'{e}tale cohomology (some
$\ell\neq p$). Then the conjecture of Langlands and Rapoport (1987,
5.e)\nocite{langlandsR1987} is true for simple Shimura varieties of PEL-types
A and C.
\end{corollary}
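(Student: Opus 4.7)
The plan is to combine Theorems \ref{g40} and \ref{g41} with Milne's earlier reduction of the Langlands--Rapoport conjecture (in the PEL cases A and C) to statements purely about motives of abelian varieties over $\mathbb{F}$. First I would invoke Theorem \ref{g40} to conclude that the full Tate conjecture holds for abelian varieties over $\mathbb{F}$, and Theorem \ref{g41} to conclude that the Hodge standard conjecture holds for abelian varieties in characteristic $p$. These two statements together ensure that numerical and homological equivalence coincide on abelian varieties over $\mathbb{F}$, that the category of motives $\mathsf{Mot}(\mathbb{F})$ generated by abelian varieties is a semisimple Tannakian category with the expected fibre functors, and that Frobenius acts semisimply.

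With this motivic input in place, the next step is to pass from the Tannakian category of motives to its pro-algebraic groupoid (the Langlands--Rapoport--Milne groupoid $\mathfrak{P}$). For abelian varieties over $\mathbb{F}$, this groupoid is explicitly described in terms of the Weil-number torus and the local data at $p$ and at primes $\ell\neq p$: it is the groupoid whose objects are CM motives and whose fibre functors at the finite primes encode the crystalline and \'etale cohomologies. Tate's theorem plus the semisimplicity of Frobenius make the description of $\mathfrak{P}$ unconditional once the Lefschetz conjecture is granted. This is the content of the construction carried out in HAV (and in \cite{milne1999lm}, \cite{milne2002p}).

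For a simple Shimura datum $(G,X)$ of PEL-type A or C, the Shimura variety $\Sh_K(G,X)$ has a canonical integral model at a prime of good reduction whose special fibre is a moduli space of abelian varieties equipped with the PEL structure. I would identify the $\mathbb{F}$-points of this moduli space, via the theory of polarised abelian motives, with the set prescribed by Langlands--Rapoport: the disjoint union over admissible morphisms $\varphi\colon\mathfrak{P}\to\mathfrak{G}_G$ of $I_\varphi(\mathbb{Q})\backslash X^p(\varphi)\times X_p(\varphi)/K$. The full Tate conjecture is exactly what is needed to match isogeny classes of PEL abelian varieties with admissible morphisms, and to compute the prime-to-$p$ and $p$-part adelic orbits correctly; here the restriction to types A and C is crucial, since only then does the PEL datum determine (via Morita equivalence) an honest abelian scheme to which the motivic machinery applies directly.

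The main obstacle is the last step: verifying that the bijection between the moduli-theoretic $\mathbb{F}$-points and the Langlands--Rapoport set really is produced by the motivic construction, including compatibility with the groupoid action and with the Hecke action at finite level. This is essentially the content of the main theorem of \cite{milne1992} / the "Points of Shimura varieties" program, and I would simply cite that result, which at the time was contingent on the Tate and Hodge standard conjectures for abelian varieties over $\mathbb{F}$ --- precisely the hypotheses now provided by Theorems \ref{g40} and \ref{g41}.
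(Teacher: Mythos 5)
Your two inputs from Theorems \ref{g40} and \ref{g41} are the right ones, and citing the conditional proof of the Langlands--Rapoport conjecture (the paper cites Langlands and Rapoport's own \S 6 rather than the later ``Points of Shimura varieties'' literature, but that is a cosmetic difference) is the right overall shape. However, you have mis-stated the hypotheses of the conditional result you are invoking, and in doing so you have skipped the one step that carries the actual content of this corollary. The Langlands--Rapoport argument is conditional on \emph{three} things: the Tate conjecture for abelian varieties over $\mathbb{F}$, the Hodge standard conjecture for abelian varieties over $\mathbb{F}$, \emph{and the Hodge conjecture for CM abelian varieties over} $\mathbb{C}$. The third hypothesis is needed to set up the reduction functor from CM motives in characteristic zero to motives over $\mathbb{F}$ (equivalently, to construct the groupoid $\mathfrak{P}$ and match isogeny classes with admissible morphisms). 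Your proposal supplies only the first two and never mentions the third.

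The gap matters because the Lefschetz standard conjecture over $\mathbb{F}$ does \emph{not} yield the Hodge conjecture for CM abelian varieties; it yields only the weaker statement that absolute Hodge classes on such varieties become algebraic modulo $p$ (Theorem \ref{g43}). The paper's proof therefore hinges on the observation that the Langlands--Rapoport argument never actually uses algebraicity of Hodge classes in characteristic zero --- only their algebraicity after reduction --- so that the mod-$p$ statement of Theorem \ref{g43} can be substituted for the Hodge conjecture throughout. Without making that observation (or independently deriving the Hodge conjecture for CM abelian varieties, which is not available here), your citation of the conditional theorem does not apply, because one of its hypotheses remains unverified. To repair the argument, add Theorem \ref{g43} as a third input and note explicitly that it suffices in place of the Hodge conjecture in the Langlands--Rapoport construction.
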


\begin{proof}
Langlands and Rapoport (ibid., \S 6) prove this under the assumption of the
Hodge conjecture for CM abelian varieties and the Tate and Hodge standard
conjectures for abelian varieties over $\mathbb{F}{}$. However, their argument
does not use that Hodge classes on CM abelian varieties are algebraic, but
only that they become algebraic modulo $p$. As this, together with the Tate
and Hodge standard conjectures, are implied by the Lefschetz standard
conjecture, so also is their conjecture.
\end{proof}

\section{Proof of Theorem 3.}

Briefly, the Tate conjecture over $\mathbb{F}{}$ implies the Lefschetz
standard conjecture over $\mathbb{F}{}$, and hence the Hodge standard
conjecture for abelian varieties (Theorem \ref{g3}). Now form the category of
abelian motives over $\mathbb{F}{}$: Grothendieck's standard conjectues hold
for it. The full Tate conjecture implies that the category of abelian motives
contains the motives of all algebraic varieties over $\mathbb{F}{}$, and so
the Hodge standard conjecture holds for them also.

We now prove more precise statements.

\begin{proposition}
\label{g29}Let $X$ be an algebraic variety over $\mathbb{F}{}$. If the Tate
conjecture holds for $X$ and some $\ell$, then the Lefschetz standard
conjecture holds for $X$ and the same $\ell$.
\end{proposition}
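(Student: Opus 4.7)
The plan is to combine Deligne's strong Lefschetz theorem (an unconditional input in $\ell$-adic cohomology over $\mathbb{F}$, via the Weil conjectures) with the Tate conjecture as hypothesis, and then descend the resulting $\mathbb{Q}_{\ell}$-isomorphism to $\mathbb{Q}$.

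First I would descend $X$ together with a hyperplane class $L$ to a model $X_{0}/\mathbb{F}_{q}$, choosing $q$ large enough that the finite-dimensional $\mathbb{Q}$-spaces $A^{i}(X)$ and $A^{d-i}(X)$ of algebraic classes are already spanned by cycles defined over $\mathbb{F}_{q}$. By Deligne's proof of Weil, the strong Lefschetz map
\[
L^{d-2i}\colon H^{2i}_{\ell}(X)(i)\longrightarrow H^{2d-2i}_{\ell}(X)(d-i)
\]
is an isomorphism; since $L$ is defined over $\mathbb{F}_{q}$, this isomorphism is $\Frob_{q}$-equivariant, so it restricts to an isomorphism between the $\Frob_{q}$-invariant subspaces.

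Next I would invoke the Tate hypothesis $T(X_{0},\ell)$: the cycle class map identifies $A^{j}(X)\otimes_{\mathbb{Q}}\mathbb{Q}_{\ell}$ with $H^{2j}_{\ell}(X)(j)^{\Frob_{q}}$ for $j=i$ and $j=d-i$. Combined with the previous step, $L^{d-2i}$ yields an isomorphism $A^{i}(X)\otimes\mathbb{Q}_{\ell}\xrightarrow{\ \sim\ }A^{d-i}(X)\otimes\mathbb{Q}_{\ell}$. But on algebraic classes this map is the $\mathbb{Q}_{\ell}$-linear extension of the $\mathbb{Q}$-linear cup product with the algebraic class $L^{d-2i}$, and by faithful flatness of $\mathbb{Q}_{\ell}$ over $\mathbb{Q}$, the underlying $\mathbb{Q}$-linear map $A^{i}(X)\to A^{d-i}(X)$ is itself an isomorphism. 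This is the Lefschetz standard conjecture for $X$ in the form stated in the introduction.

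There is no serious obstacle: all three ingredients are either classical (strong Lefschetz, flatness) or present as hypothesis (Tate). The only bookkeeping is arranging $q$ so that $T(X_{0},\ell)$ captures all of $A^{i}(X)$ and $A^{d-i}(X)$, which is immediate from finite-dimensionality. If one further wants the strong form $B$ of the Lefschetz conjecture --- existence of an algebraic correspondence $\Lambda$ inducing the inverse of strong Lefschetz on the whole of cohomology --- this would follow from the form just proved via the K\"{u}nneth standard conjecture, known over $\mathbb{F}$ by Katz--Messing as a consequence of the Weil conjectures.
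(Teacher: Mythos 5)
There is a genuine gap, and it sits exactly where you wave your hands at the end. Your argument establishes conjecture $A(X,L)$ --- that $L^{d-2i}$ restricts to a bijection between the $\mathbb{Q}$-spaces of algebraic classes in degrees $2i$ and $2d-2i$ --- but the proposition, as the paper states and uses it (see the proofs of Proposition \ref{g13} and Theorem \ref{g7}, which invoke the equivalence with $B$), asserts the strong form $B(X)$: the existence of an \emph{algebraic correspondence} inducing an isomorphism $H_{\ell}^{2d-i}(X)\rightarrow H_{\ell}^{i}(X)$ on all of cohomology, odd degrees included. Your proposed bridge, ``$A(X,L)$ plus the K\"{u}nneth conjecture $C(X)$ gives $B(X)$,'' is not a known implication and is not how the upgrade works: what Kleiman proves (and what the paper itself quotes in Corollary \ref{g68}) is $A(X\times X,\,L\otimes 1+1\otimes L)\Rightarrow B(X)$, so along your route you would have to run the entire argument on $X\times X$, which needs the Tate conjecture for $X\times X$, not just for $X$. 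The paper's own proof is shorter and lands directly on $B(X)$: the inverse $\theta^{i}$ of the Lefschetz isomorphism commutes with Galois, so its graph is a Tate class on $X\times X$; by the Tate conjecture it lies in the $\mathbb{Q}_{\ell}$-span of algebraic classes, and any algebraic class sufficiently close to it still induces an isomorphism because invertibility is an open condition. That approximation step is the idea your proposal is missing.

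There is also a secondary soft spot in your descent to $\mathbb{Q}$. The ``faithful flatness'' step tacitly assumes that the natural map $A^{j}(X)\otimes_{\mathbb{Q}}\mathbb{Q}_{\ell}\rightarrow H_{\ell}^{2j}(X)(j)$ is injective, i.e., that cycle classes which are $\mathbb{Q}$-linearly independent remain $\mathbb{Q}_{\ell}$-linearly independent. This is false for arbitrary $\mathbb{Q}$-subspaces of a $\mathbb{Q}_{\ell}$-vector space, and for algebraic classes it is itself a delicate point about the $\mathbb{Q}$-structure of homological equivalence; the Tate conjecture as you have invoked it gives surjectivity onto the Frobenius invariants, not this injectivity. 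Without it, a bijection between the $\mathbb{Q}_{\ell}$-spans does not descend to a bijection $A^{i}(X)\rightarrow A^{d-i}(X)$ over $\mathbb{Q}$. The paper's argument never needs this descent, which is another reason to prefer it.
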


\begin{proof}
To prove the Lefschetz standard conjecture for $X$ and a prime $\ell$, it
suffices to show that, for each $i\leq d\overset{\df}{=}\dim(X)$, there exists
an algebraic correspondence inducing an isomorphism $H_{\ell}^{2d-i}%
(X)\rightarrow H_{\ell}^{i}(X)$ (\cite{kleiman1994}, 4-1, $\nu
(X)\Leftrightarrow B(X)$). The inverse $\theta^{i}$ of the Lefschetz map
$L^{d-i}\colon H_{\ell}^{i}(X)\rightarrow H_{\ell}^{2d-i}(X)(d-i)$ is an
isomorphism $H_{\ell}^{2d-i}(X)(d-i)\rightarrow H_{\ell}^{i}(X)$ commuting
with the action of the Galois group. Any algebraic class $\nu^{i}$
sufficiently close to the graph of $\theta^{i}$ will induce the required isomorphism.
\end{proof}

\begin{proposition}
\label{g20}Let $H$ be a Weil cohomology theory on algebraic varieties over an
algebraically closed field $k$, and let $X$ and $Y$ be algebraic varieties
over $k$. Assume that there exists an algebraic correspondence $\alpha$ on
$X\times Y$ such that%
\[
\alpha_{\ast}\colon H^{\ast}(X)\rightarrow H^{\ast}(Y)
\]
is injective. If the Hodge standard conjecture holds for $Y$, then it holds
for $X$.
\end{proposition}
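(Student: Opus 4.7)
The Hodge standard conjecture for $X$ asserts that, for each $i\le\dim(X)/2$, the Hodge--Riemann pairing
\[
\phi_X(x,y)=(-1)^i\int_X x\cdot L_X^{\dim(X)-2i}\cdot y
\]
is positive definite on the $\mathbb{Q}$-subspace $a\!H^{2i}_{\mathrm{prim}}(X)$ of $L_X$-primitive algebraic classes. The plan is to transfer this positivity from $Y$ via the algebraic, cohomology-injective map $\alpha_{\ast}$.

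The first step is to upgrade the Hodge standard conjecture for $Y$ into a positive definite form on all of $a\!H^{\ast}(Y)$, not merely on its primitive subspaces. Using the Lefschetz decomposition $a\!H^{2j}(Y)=\bigoplus_k L_Y^k\cdot a\!H^{2(j-k)}_{\mathrm{prim}}(Y)$, I would assemble a symmetric bilinear form $\Phi_Y$ by endowing each primitive summand with its Hodge--Riemann pairing (with the standard sign) and declaring distinct summands orthogonal; then $\Phi_Y$ is positive definite. Since $\alpha$ is algebraic, $\alpha_{\ast}$ sends $a\!H^{\ast}(X)$ into $a\!H^{\ast}(Y)$, and the injectivity hypothesis guarantees that the pullback $\Psi_X(x,y):=\Phi_Y(\alpha_{\ast}x,\alpha_{\ast}y)$ is a positive definite form on $a\!H^{\ast}(X)$.

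The heart of the argument is to compare $\Psi_X$, restricted to the primitive subspace $a\!H^{2i}_{\mathrm{prim}}(X)$, with the Hodge standard form $\phi_X$. By the projection formula for correspondences, $\Psi_X(x,y)$ can be written as an integral over $X$ of $x\cdot y$ against a fixed algebraic class on $X$ built from $\alpha^t\circ L_Y^{\bullet}\circ\alpha$; equivalently, it corresponds to a self-correspondence $\alpha^{\ast}\circ\Lambda_Y\circ\alpha_{\ast}$ of $X$ assembled from the Hodge--Riemann data on $Y$. I would aim to show that this self-correspondence, restricted to $a\!H^{2i}_{\mathrm{prim}}(X)$, is a positive scalar multiple of multiplication by $L_X^{\dim(X)-2i}$, so that the positivity of $\Psi_X$ forces the positivity of $\phi_X$. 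A degree of freedom available is the polarization-invariance of the Hodge standard conjecture: since the ample cone of $X$ is connected and the signature of the Hodge--Riemann form is locally constant in the polarization, one may replace $L_X$ by a more convenient ample class, ideally one of the form $\alpha^{\ast}L_Y$ when available.

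The main obstacle is precisely this comparison: the Lefschetz operators $L_X$ and $L_Y$ are chosen independently of $\alpha$ and $\alpha_{\ast}$ need not intertwine them, so the primitive decomposition on $X$ does not pull back cleanly from the one on $Y$. Overcoming it will require either a direct projection-formula identification of $\Psi_X|_{a\!H^{2i}_{\mathrm{prim}}(X)}$ with a positive scalar multiple of $\phi_X$, obtained by a careful sign analysis in the Lefschetz decomposition on $Y$ and a judicious modification of $\alpha$ by powers of $L_X$ and $L_Y$ to balance degrees, or an appeal to polarization-invariance of Hodge standard combined with a choice of $L_X$ adapted to $\alpha$ that reduces the problem to a situation in which the primitive structures on $X$ and $Y$ match under $\alpha_{\ast}$.
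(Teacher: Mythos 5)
Your setup is reasonable and you have put your finger on exactly the right difficulty, but the proposal stops precisely where the proof has to begin: you establish that the pulled-back form $\Psi_X(x,y)=\Phi_Y(\alpha_{\ast}x,\alpha_{\ast}y)$ is positive definite, but never that the Hodge form $\phi_X$ on $a\!H^{2i}_{\mathrm{prim}}(X)$ is, and neither of your two proposed ways of bridging that gap can be carried out as stated. The self-correspondence $\alpha^{\ast}\circ\Lambda_Y\circ\alpha_{\ast}$ has no reason to act on $L_X$-primitive algebraic classes as a positive scalar times $L_X^{\dim X-2i}$: already for $X=Y$ and $\alpha$ an arbitrary algebraic class with $\alpha_{\ast}$ invertible, $\Psi_X$ is positive definite without being proportional to $\phi_X$ on any primitive piece. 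The alternative route is also blocked, because $\alpha$ is a correspondence rather than a morphism, so $\alpha^{\ast}L_Y$ need not be a hyperplane class, need not be ample, and need not even lie in degree $2$; the connectedness-of-the-ample-cone argument therefore does not apply to it. Positive definiteness of $\Psi_X$ by itself says nothing about $\phi_X$ without a comparison, and that comparison is the entire content of the proposition.

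The paper disposes of this in one line by citing Kleiman 1968, 3.11, and Saavedra 1972, VI, 4.4.2, i.e., by invoking the formal theory of Weil forms and polarizations on categories of motives, which exists precisely to sidestep the incompatibility of the two Lefschetz $\mathfrak{sl}_{2}$-structures that you ran into. In that formalism one reformulates the Hodge standard conjecture for $Y$ as the positivity of a canonical Weil form on all of $a\!H^{\ast}(Y)$ (built from the $\ast$-operator of the Lefschetz decomposition, much as in your first step); the essential point is then that positivity is a property of the associated involution of the algebra of algebraic correspondences, that it passes to subobjects cut out by an algebraic injection such as $\alpha_{\ast}$, and that it does not depend on which Weil form of the given parity is used to define it. It is this uniqueness-and-transfer statement, not any identification of $\Psi_X$ with a multiple of $\phi_X$, that converts positivity on $Y$ into positivity of the Hodge form on $X$. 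To complete your argument you would need to import (or reprove) that machinery; as it stands the proposal has a genuine gap at its central step.
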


\begin{proof}
Apply \cite{kleiman1968}, 3.11, and \cite{saavedra1972}, VI, 4.4.2.
\end{proof}

\begin{lemma}
\label{g21}Let $X$ be an algebraic variety over $\mathbb{F}{}_{q}$. If
$S(X\times X,\ell)$ holds for some $\ell$, then the Frobenius endomorphism
acts semisimply on the $\ell$-adic \'{e}tale cohomology of $X$.
\end{lemma}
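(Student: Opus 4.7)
The plan is to isolate a Galois-stable summand of $H_\ell^{2n}(X\times X)(n)$ (where $n=\dim X$) canonically identified with $\mathrm{End}_{\mathbb{Q}_\ell}(H_\ell^i(X))$ equipped with the conjugation action of Frobenius, to invoke $S(X\times X,\ell)$ on this summand, and then to deduce semisimplicity of Frobenius from a general linear-algebra lemma.

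First, for each $i$ I would use the K\"unneth formula and Poincar\'e duality to construct a Galois-equivariant isomorphism
\[
H_\ell^i(X)\otimes H_\ell^{2n-i}(X)(n)\;\simeq\;H_\ell^i(X)\otimes H_\ell^i(X)^{\vee}\;\simeq\;\mathrm{End}_{\mathbb{Q}_\ell}(H_\ell^i(X)),
\]
under which the Frobenius $F$ of $\mathbb{F}_q$ acts on the right by conjugation $A\mapsto FAF^{-1}$. The left-hand side is a Galois-equivariant direct summand of $H_\ell^{2n}(X\times X)(n)$, so the hypothesis $S(X\times X,\ell)$ forces the natural map from invariants to coinvariants to be an isomorphism on each summand $\mathrm{End}(H_\ell^i(X))$.

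Second, I would establish the following linear-algebra lemma: for any automorphism $F$ of a finite-dimensional $\mathbb{Q}_\ell$-vector space $V$, the natural map $\mathrm{End}(V)^F\to\mathrm{End}(V)_F$ (with $F$ acting by conjugation) is an isomorphism if and only if $F$ is semisimple. The forward direction reduces to the fact that tensor products of semisimple operators are semisimple. For the converse, write $F=F_sF_u$ for the Jordan decomposition; since $\mathrm{ad}(F_s)$ and $\mathrm{ad}(F_u)$ commute and are respectively semisimple and unipotent, they form the Jordan decomposition of $\mathrm{ad}(F)$. Hence the generalized $1$-eigenspace of $\mathrm{ad}(F)$ coincides with the centralizer $C(F_s):=\{A:AF_s=F_sA\}$, and the invariants-to-coinvariants map is an isomorphism precisely when $\mathrm{ad}(F)$ acts semisimply on $C(F_s)$. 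On $C(F_s)$ the action of $\mathrm{ad}(F)$ reduces to $\mathrm{ad}(F_u)$, which is unipotent; an operator that is simultaneously semisimple and unipotent is the identity, so $F_u$ lies in the center of $C(F_s)$. Since $C(F_s)$ restricts to a full matrix algebra on each $F_s$-isotypic component of $V$ and $F_u$ is itself unipotent, this forces $F_u=1$, so $F$ is semisimple.

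Combining the two steps shows that $F$ acts semisimply on each $H_\ell^i(X)$, which is the desired conclusion. The expected main obstacle is the linear-algebra lemma; the cohomological step is routine bookkeeping with K\"unneth and Poincar\'e duality.
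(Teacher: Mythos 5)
Your proposal is correct and follows essentially the same route as the paper: reduce via the K\"unneth formula and Poincar\'e duality to the summand $H_\ell^i(X)\otimes H_\ell^{2n-i}(X)(n)\simeq\End(H_\ell^i(X))$ with Frobenius acting by conjugation, and then argue by linear algebra. Your explicit lemma that $\End(V)^F\to\End(V)_F$ is an isomorphism iff $F$ is semisimple, proved via the Jordan decomposition of $\ad(F)$, is precisely the ``linear algebra'' the paper leaves implicit, and your argument for it is sound.
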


\begin{proof}
The statement $S(X\times X,\ell)$ says that $1$, if an eigenvalue of the
Frobenius element acting on the $\ell$-adic cohomology of $X\times X$, is
semisimple. From the K\"{u}nneth formula%
\[
H_{\ell}^{r}(X\times X)\simeq\bigoplus\nolimits_{i+j=r}H_{\ell}^{i}(X)\otimes
H_{\ell}^{j}(X)
\]
and linear algebra, we see that this implies that all eigenvalues on $H_{\ell
}^{\ast}(X)$ are semisimple.
\end{proof}

It is conjectured that the Frobenius element always acts semisimply
(Semisimplicity Conjecture).

Fix a power $q$ of $p$ and a prime $\ell\neq p$. Define a \emph{Tate structure
}to be a finite-dimensional $\mathbb{Q}{}_{\ell}$-vector space with a linear
(Frobenius) map $\varpi$ whose characteristic polynomial lies in $\mathbb{Q}%
{}[T]$ and whose eigenvalues are Weil $q$-numbers, i.e., algebraic numbers
$\alpha$ such that, for some integer $m$ (called the weight of $\alpha$),
$\left\vert \rho(\alpha)\right\vert =q^{m/2}$ for every homomorphism
$\rho\colon\mathbb{Q}{}[\alpha]\rightarrow\mathbb{C}$, and, for some integer
$n$, $q^{n}\alpha$ is an algebraic integer. \noindent\ When the eigenvalues
are all of weight $m$ (resp. algebraic integers, resp. semisimple), we say
that $V$ is of \emph{weight} $m$ (resp. \emph{effective}, resp.
\emph{semisimple}). For example, for any smooth complete variety $X$ over $k$,
$H_{\ell}^{i}(X)$ is an effective Tate structure of weight $i/2$
(\cite{deligne1974}), which is semisimple if $X$ is an abelian variety
(\cite{weil1948}, no.~70).

\begin{proposition}
\label{g23}Every effective semisimple Tate structure is isomorphic to a Tate
substructure of $H_{\ell}^{\ast}(A)$ for some abelian variety $A$ over
$\mathbb{F}{}_{q}$.
\end{proposition}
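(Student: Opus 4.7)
The plan is to reduce to simple Tate structures, realize each simple piece by Honda--Tate, and recombine by K\"unneth.

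Since $\varpi$ is semisimple, the $\mathbb{Q}$-irreducible factorization of its characteristic polynomial gives a decomposition $V \cong \bigoplus_P V_P^{n_P}$ with $V_P \cong \mathbb{Q}_\ell[T]/(P(T))$ a simple Tate structure. Given embeddings $V_P \hookrightarrow H^{e_P}(A_P)$ for abelian varieties $A_P/\mathbb{F}_q$, K\"unneth embeds $V_P^{n_P}$ into $H^{\ast}(A_P^{n_P})$ by placing one copy of $V_P$ in each factor and $H^0$ in the others; the resulting slots sit in distinct K\"unneth pieces, and the full sum $V$ then embeds into $H^{\ast}\bigl(\prod_P A_P^{n_P}\bigr)$. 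So the task reduces to realizing each simple $V_P$.

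Fix $P$ whose roots form a Galois orbit of effective weight-$m$ Weil $q$-numbers. For $m = 0$ we must have $P = T - 1$ (non-trivial roots of unity cannot occur on $H^{\ast}$ of an abelian variety), and this $V_P$ is realized by $H^0$ of any abelian variety. For $m = 1$, the Honda--Tate theorem supplies a simple abelian variety $A/\mathbb{F}_q$ whose Frobenius on $H^1(A, \mathbb{Q}_\ell)$ has characteristic polynomial a power of $P$, so $V_P$ embeds into $H^1(A)$. For $m \geq 2$, I write a chosen root $\pi$ of $P$ as a product $\pi = \pi_1 \cdots \pi_m$ of effective weight-$1$ Weil $q$-numbers (the factorization lemma discussed below); Honda--Tate then provides abelian varieties $A_j$ with $\pi_j$ a Frobenius eigenvalue on $H^1(A_j)$, and by K\"unneth the product $\pi$ appears as an eigenvalue of Frobenius on $H^1(A_1) \otimes \cdots \otimes H^1(A_m) \subset H^m(A_1 \times \cdots \times A_m)$. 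The $\mathbb{Q}$-rational Galois-closed subspace spanned by the eigenspaces for the full orbit of $\pi$ is a sub-Tate-structure isomorphic to $V_P$.

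The main obstacle is the factorization lemma: every effective weight-$m$ Weil $q$-number is a product of $m$ effective weight-$1$ Weil $q$-numbers. I would prove this by enlarging the CM field $E \supseteq \mathbb{Q}(\pi)$ so that each prime $v \mid p$ admits weight-$1$ Weil numbers of prescribed $p$-adic valuation (for instance by arranging that $E$ contain an imaginary quadratic field in which $p$ splits), partitioning the valuations $v(\pi)$ (subject to $v(\pi) + \bar{v}(\pi) = m \cdot v(q)$ at each pair $\{v, \bar{v}\}$) into $m$ admissible weight-$1$ slope data, invoking the converse direction of Honda--Tate to realize each slope datum by a weight-$1$ Weil number $\pi_j \in E$, and absorbing any residual root-of-unity discrepancy into one of the $\pi_j$ (multiplication by a root of unity preserves both the Weil-number property and effectivity). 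The delicate step is arranging that $\prod \pi_j$ equals $\pi$ on the nose rather than a Galois conjugate, which requires careful control of the CM unit group.
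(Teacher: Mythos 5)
Your reduction to simple Tate structures and the K\"unneth recombination match the paper's (tacit) first step, and the weight-one case is exactly Honda's theorem. But for weight $m\geq 2$ you take a genuinely different route from the paper, and the step you defer --- the ``factorization lemma'' --- is precisely where the difficulty lives; your sketch of it does not go through as written. The obstruction is not the ambiguity you single out at the end (that part is easy: $\pi/\prod_j\pi_j$ is an algebraic number that is a unit at every finite place and has all archimedean absolute values $1$, hence is a root of unity by Kronecker, and it can be absorbed into $\pi_1$). The real problem is the existence of each $\pi_j$ as a weight-one Weil number for the \emph{same} $q$. Your slope datum $f_j(v)=s_v/m$ determines a fractional ideal $\mathfrak{a}_j$ of the CM field with $\mathfrak{a}_j\bar{\mathfrak{a}}_j=(q)$, and the standard arguments give only that some power $\mathfrak{a}_j^{\,n}$ has a generator that can be corrected by a unit to have constant archimedean absolute value; that produces a weight-one Weil $q^{n}$-number, not a Weil $q$-number. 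Enlarging $E$ does not obviously repair this: ideals capitulate in the Hilbert class field, but that field need not be CM, while the constant-absolute-value condition forces $\pi_j$ into a CM subfield. So the lemma is left unproved, and I do not see how to establish it without re-introducing a passage to $\mathbb{F}_{q^{n}}$.

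The paper sidesteps factorization entirely: since the roots of the irreducible polynomial $P$ have absolute value $q^{m/2}=(q^{m})^{1/2}$, they are already weight-one Weil $q^{m}$-numbers, so Honda's theorem yields an abelian variety $A$ over $\mathbb{F}_{q^{m}}$ whose Frobenius has characteristic polynomial a power of $P$; the Weil restriction $B$ of $A$ to $\mathbb{F}_{q}$ then has as Frobenius eigenvalues on $H_{\ell}^{1}(B)$ the $m$-th roots of those of $A$, and $V$ is recovered inside $H_{\ell}^{m}(B)$. This is exactly the device that converts Weil-number data over $\mathbb{F}_{q^{m}}$ into a variety over $\mathbb{F}_{q}$, and it is what your argument is missing. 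Separately, your weight-zero case is circular: the observation that nontrivial roots of unity do not occur as Frobenius eigenvalues on $H_{\ell}^{\ast}(A)$ shows that a weight-zero structure with $P\neq T-1$ (e.g.\ $\varpi=-1$, which is an effective semisimple Tate structure) cannot be embedded; it does not show that the case is vacuous. The paper's proof also silently assumes $m\geq 1$, and in the intended application the weight-zero part of $H_{\ell}^{\ast}(X)$ is trivial, so nothing is lost there --- but your parenthetical should not be presented as disposing of the case.
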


\begin{proof}
We may assume that the Tate structure $V$ is simple. Then $V$ has weight $m$
for some $m\geq0$, and the characteristic polynomial $P(T)$ of $\varpi$ is a
monic irreducible polynomial with coefficients in $\mathbb{Z}{}$ whose roots
all have real absolute value $q^{m/2}$. According to Honda's theorem
(\cite{honda1968}; \cite{tate1968}), $P(T)$ is the characteristic polynomial
of an abelian variety $A$ over $\mathbb{F}{}_{q^{m}}$. Let $B$ be the abelian
variety over $\mathbb{F}{}_{q}$ obtained from $A$ by restriction of the base
field. The eigenvalues of the Frobenius map on $H_{\ell}^{1}(B)$ are the
$m\mathrm{th}$-roots of the eigenvalues of the Frobenius map on $H_{\ell}%
^{1}(A)$, and it follows that $V$ is a Tate substructure of $H_{\ell}^{m}(B)$.
\end{proof}

\begin{theorem}
\label{g26}Let $X$ be an algebraic variety over $\mathbb{F}{}$, and let $\ell$
be a prime $\neq p$. If the Frobenius map acts semisimply on $H_{\ell}^{\ast
}(X)$ and the Tate conjecture holds for $\ell$ and all varieties of the form
$X\times A$ with $A$ an abelian variety, then the Hodge standard conjecture
holds for $X$ and $\ell$.
\end{theorem}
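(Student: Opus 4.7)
The plan is to find an algebraic correspondence from $X$ to an abelian variety $A$ over $\mathbb{F}{}$ whose induced map on $\ell$-adic cohomology is injective, and then apply Proposition \ref{g20} to transfer the Hodge standard conjecture from $A$ to $X$.

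First I would realize $H_{\ell}^{\ast}(X)$ as a Galois-equivariant Tate substructure of $H_{\ell}^{\ast}(A)$ for some abelian variety $A$ over $\mathbb{F}{}$. Each $H_{\ell}^{i}(X)$ is an effective Tate structure of weight $i$ by Deligne's purity theorem, and it is semisimple by hypothesis. Applying Proposition \ref{g23} degree by degree (and passing to a common base field $\mathbb{F}{}_{q}$), I obtain abelian varieties $A_{i}$ over $\mathbb{F}{}_{q}$ such that $H_{\ell}^{i}(X)$ embeds as a Tate substructure of $H_{\ell}^{i}(A_{i})$; setting $A=\prod_{i}A_{i}$ and using the pullbacks along the projections $A\to A_{i}$, these embeddings assemble into a single Galois-equivariant injection $\iota\colon H_{\ell}^{\ast}(X)\hookrightarrow H_{\ell}^{\ast}(A)$.

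Next I would upgrade $\iota$ to an algebraic correspondence. Via Poincar\'{e} duality on $X$ and the K\"{u}nneth formula, $\iota$ corresponds to a Galois-invariant class $\gamma\in H_{\ell}^{2d}(X\times A)(d)^{\Gal(\mathbb{F}{}/\mathbb{F}{}_{q})}$, where $d=\dim X$, whose associated correspondence $\gamma_{\ast}$ equals $\iota$ and hence is injective. By the hypothesis that the Tate conjecture holds for $X\times A$ and $\ell$, this finite-dimensional $\mathbb{Q}{}_{\ell}$-vector space of Galois invariants is the $\mathbb{Q}{}_{\ell}$-span of the $\mathbb{Q}{}$-subspace of algebraic classes. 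Since the condition that a Galois-invariant class $\eta$ satisfy that $\eta_{\ast}\colon H_{\ell}^{\ast}(X)\to H_{\ell}^{\ast}(A)$ is injective is an open condition and $\mathbb{Q}{}$ is dense in $\mathbb{Q}{}_{\ell}$, I can perturb $\gamma$ to a $\mathbb{Q}{}$-linear combination $\alpha$ of algebraic classes, hence algebraic, for which $\alpha_{\ast}$ remains injective. Applying Proposition \ref{g20} to $\alpha$ then yields the Hodge standard conjecture for $X$, granted it for $A$.

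I expect the subtlest point to be the density-and-openness step: one must check carefully that $\gamma$, a priori only a $\mathbb{Q}{}_{\ell}$-linear combination of algebraic classes, can be replaced by a genuine algebraic class without losing injectivity of the induced map. A secondary concern is the invocation of the Hodge standard conjecture for the auxiliary abelian variety $A$; this is not a formal consequence of the literal hypotheses of Theorem \ref{g26}, but it is available in the intended application, where the result feeds into the proof of Theorem \ref{g4} under the full Tate conjecture over $\mathbb{F}{}$, which yields the Lefschetz standard conjecture over $\mathbb{F}{}$ (Proposition \ref{g29}) and hence the Hodge standard conjecture for abelian varieties in characteristic $p$ (Theorem \ref{g41}).
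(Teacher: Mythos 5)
Your proposal follows the paper's own proof essentially verbatim: embed $H_{\ell}^{\ast}(X)$ into $H_{\ell}^{\ast}(A)$ via Proposition \ref{g23}, realize the inclusion as a Galois-invariant cohomological correspondence on $X\times A$, use the Tate conjecture to approximate it by an algebraic correspondence that is still injective, and conclude with Proposition \ref{g20}. Your closing caveat is also well taken --- Proposition \ref{g20} needs the Hodge standard conjecture for the auxiliary abelian variety $A$, which the paper's terse proof leaves implicit but which is indeed supplied in the intended application via Proposition \ref{g29} and Theorem \ref{g41}.
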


\begin{proof}
According to \ref{g23}, there exists an inclusion $H_{\ell}^{\ast
}(X)\hookrightarrow H_{\ell}^{\ast}(A)$ of Tate structures with $A$ an abelian
variety. This map is defined by a cohomological correspondence on $X\times A$
fixed by the Galois group. Any algebraic correspondence sufficiently close to
this correspondence defines an inclusion $H_{\ell}^{\ast}(X)\hookrightarrow
H_{\ell}^{\ast}(A)$. Now we can apply Proposition \ref{g20}.
\end{proof}

\begin{corollary}
\label{g28}If the Tate and semisimplicity conjectures hold for all algebraic
varieties over $\mathbb{F}{}$ and some prime number $\ell$, then both the full
Tate and Grothendieck standard conjectures hold for all algebraic varieties
over $\mathbb{F}{}$ and all $\ell$.
\end{corollary}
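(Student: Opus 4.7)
The plan is to bolt together the pieces already established. Fix the prime $\ell_{0}$ for which the Tate and semisimplicity conjectures are assumed to hold for every algebraic variety over $\mathbb{F}{}$.

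First I would deduce the full Tate conjecture. Semisimplicity of Frobenius on the cohomology of $X\times X$ implies $S(X,\ell_{0})$ by Lemma \ref{g21}, and together with $T(X,\ell_{0})$ the folklore recalled in \ref{g17} upgrades this to the full Tate conjecture $T(X)$; the same folklore then yields $T(X,\ell)$ and $S(X,\ell)$ for every $\ell\neq p$. Next, Proposition \ref{g29} applied at each $\ell$ delivers the Lefschetz standard conjecture for every $X$ at every $\ell\neq p$ (the required Tate hypothesis for $X\times X$ is available since we have Tate for all varieties). From Lefschetz standard, the K\"unneth conjecture $C$ and the equivalence of homological with numerical equivalence $D$ follow by the standard implications of \cite{kleiman1968} and \cite{kleiman1994}.

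For the Hodge standard conjecture, Theorem \ref{g26} already gives the statement at $\ell_{0}$ for every $X$: its hypotheses are precisely that Frobenius act semisimply on $H_{\ell_{0}}^{\ast}(X)$ and that the Tate conjecture hold at $\ell_{0}$ for each $X\times A$ with $A$ abelian, both of which are part of our standing assumptions. To pass from $\ell_{0}$ to every other $\ell$ and to crystalline cohomology, I would argue that, once $B$ and $D$ are available, the $\mathbb{Q}{}$-vector space of algebraic classes modulo homological equivalence, the action of $L$, the primitive subspace, and the intersection pairing are all intrinsic to $X$, being determined by the numerical Chow groups $A_{\num}^{\ast}(X)_{\mathbb{Q}}$. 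The Hodge standard quadratic form is therefore independent of the chosen classical Weil cohomology, so its positivity at $\ell_{0}$ forces its positivity for every $\ell$-adic theory and for crystalline cohomology in characteristic $p$.

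The main obstacle is precisely this last independence-of-cohomology step: all earlier stages are a formal assembly of results already in hand, whereas verifying that every ingredient in the Hodge standard quadratic form is canonically identified across Weil cohomologies relies crucially on $B\Rightarrow D$, so that the numerical cycle groups and the operators acting on them can be computed in any one cohomology theory without ambiguity.
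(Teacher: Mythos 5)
Your assembly is essentially the one the paper intends (its own proof of Corollary \ref{g28} is literally ``immediate consequence of the theorem''), but two points need attention. First, the claim that conjecture $D$ ``follows from Lefschetz standard by the standard implications'' is not right: $B\Rightarrow A\Rightarrow C$ is standard, but $B\Rightarrow D$ is not a known implication --- in Kleiman's diagram one needs the Hodge standard conjecture to pass from $A$ to $D$ (cf.\ Proposition \ref{g67}). In the present situation $D$ should instead be extracted from the Tate package itself: the folklore equivalences recalled in \ref{g17} (\cite{tate1994}) show that $T(X,\ell)+S(X,\ell)$ is equivalent to $T(X,\ell)$ together with the coincidence of $\ell$-adic homological and numerical equivalence, so $D$ at every $\ell$ comes for free once the full Tate conjecture is known; alternatively, once you have the Hodge standard conjecture at $\ell_{0}$ from Theorem \ref{g26} and $A$ from Proposition \ref{g29}, Proposition \ref{g67} gives $D$ at $\ell_{0}$. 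Since your final independence-of-cohomology argument leans on $D$, this justification needs to be repaired, though the conclusion is recoverable.

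Second, the step you single out as the main obstacle is not actually needed for the statement as phrased. Once the full Tate conjecture holds for all varieties and all $\ell$, Lemma \ref{g21} applied to $S(X\times X,\ell)$ gives semisimplicity of Frobenius on $H_{\ell}^{\ast}(X)$ for every $\ell\neq p$, so the hypotheses of Theorem \ref{g26} are satisfied prime by prime and the Hodge standard conjecture at each $\ell$ follows by simply re-running that theorem; no comparison between Weil cohomologies is required. Your comparison argument (via $D$, identifying $A_{H}^{\ast}(X)$ with $A_{\num}^{\ast}(X)$ so that the primitive decomposition and intersection form become intrinsic) is correct and is what one would invoke to include the crystalline theory, but for the $\ell$-adic theories it is an unnecessary detour. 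Finally, a small misattribution: Lemma \ref{g21} proves $S(X\times X,\ell)\Rightarrow{}$semisimplicity, not the converse; the implication you want at the outset (semisimplicity $\Rightarrow S(X,\ell_{0})$) is the trivial direction.
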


\begin{proof}
Immediate consquence of the theorem.
\end{proof}

\section{An unconditional variant}

We use \cite{schappi2020} to replace some of the above statements by
unconditional variants.

\subsection{Characteristic zero}

Let $k$ be an algebraically closed field of characteristic zero, and fix an
embedding $k\hookrightarrow\mathbb{C}{}$. Let $H$ denote the Weil cohomology
theory $X\rightsquigarrow H^{\ast}(X(\mathbb{C}{}),\mathbb{Q}{})$, and let
$\Mot_{H}(k)$ denote the category of motives defined using almost-algebraic
classes as correspondences. It is a graded pseudo-abelian rigid tensor
category\footnote{tensor category (functor)\,=\,symmetric monoidal category
(functor)} over $\mathbb{Q}{}$.

According to \cite{schappi2020}, \S 3, the fibre functor $\omega_{H}%
\colon\Mot_{H}(k)\rightarrow\mathbb{Z}{}$-$\Vc_{\mathbb{Q}{}}$ factors in a
canonical way through a \textquotedblleft universal\textquotedblright\ graded
tannakian category $\mathcal{M}{}_{H}(k)$ over $\mathbb{Q}{}$,%
\[
\Mot_{H}(k)\overset{[-]}{\longrightarrow}\mathcal{M}{}_{H}(k)\overset{\omega
}{\longrightarrow}\mathbb{Z}{}\text{-}\Vc_{\mathbb{Q}{}},
\]
where $\omega$ is a graded fibre functor.\footnote{fibre functor$\,=\,$exact
faithful tensor functor}

We define the \emph{algebraic*} classes on an algebraic variety $X$ over $k$
to be the elements of $\Hom(\1,[h(X)])$. The Lefschetz standard conjecture
holds for algebraic* classes (\cite{schappi2020}, \S 3; alternatively, apply
Corollary \ref{g78} below).

Now $\omega_{H}$ is a functor from $\Mot_{H}(k)$ into the category
$\Hdg_{\mathbb{Q}{}}$ of polarizable rational Hodge structures. This factors
through $\mathcal{M}{}_{H}(k)$,%
\[
\Mot_{H}(k)\overset{[-]}{\longrightarrow}\mathcal{M}{}_{H}(k)\overset{\omega
}{\longrightarrow}\Hdg_{\mathbb{Q}{}}\text{,}%
\]
where $\omega$ is a functor of graded tannakian categories. Therefore
algebraic* classes on $X$ are Hodge classes relative to the given embedding of
$k$ into $\mathbb{C}{}$. It follows that Grothendieck's standard conjecture of
Hodge type holds for algebraic* classes. Moreover, all algebraic* classes on
abelian varieties are absolutely Hodge (\cite{deligne1982}, 2.11).

The same proof as for almost-algebraic classes (see \S 1) shows that the Hodge
conjecture holds for algebraic* classes on abelian varieties over
$\mathbb{C}{}$, i.e., all Hodge classes on abelian varieties over
$\mathbb{C}{}$ are algebraic*. As a consequence, for abelian varieties
satisfying the Mumford-Tate conjecture, the Tate conjecture holds for
algebraic* classes.

\subsection{Characteristic $p$}

Fix a prime number $p$, and let $\mathbb{F}{}$ denote an algebraic closure of
$\mathbb{F}{}_{p}$. For $\ell\neq p$, we let $\Mot_{\ell}(\mathbb{F}{})$
denote the category of motives over $\mathbb{F}{}$ defined using algebraic
classes modulo $\ell$-adic homological equivalence as correspondences. It is a
graded pseudo-abelian rigid tensor category over $\mathbb{Q}{}$.

According to \cite{schappi2020}, \S 3, the graded tensor functor $\omega
_{\ell}\colon\Mot_{\ell}(\mathbb{F}{})\rightarrow\mathbb{Z}{}$%
-$\Vc_{\mathbb{Q}{}_{\ell}}$ factors in a canonical way through a graded
tannakian category $\mathcal{M}{}_{\ell}(\mathbb{F}{})$,%
\[
\Mot_{\ell}(k)\overset{[-]}{\longrightarrow}\mathcal{M}{}_{\ell}(\mathbb{F}%
{})\overset{\omega}{\longrightarrow}\mathbb{Z}{}\text{-}\Vc_{\mathbb{Q}{}},
\]
where $\omega$ is a graded fibre functor. Unfortunately, we do not know that
$\End(\1)=\mathbb{Q}{}$ in $\mathcal{M}{}_{\ell}(\mathbb{F}{})$, only that it
is a subfield of $\mathbb{Q}{}_{\ell}.$\footnote{Andr\'{e}'s category of
motivated classes in characteristic $p$ has the same problem.}

Let $X$ be an algebraic variety over $\mathbb{F}$. We define the
\textbf{algebraic*} classes on $X$ to be the elements of $\Hom(\1,[h(X)])$. As
before, the Lefschetz standard conjecture holds for algebraic* classes.
Therefore Proposition \ref{g13} holds unconditionally for algebraic* classes:
let $f\colon X\rightarrow S$ be as in the proposition, and let $t$ be a global
section of the sheaf $R^{2r}f_{\ast}\mathbb{Q}{}_{\ell}(r)$; if $t_{s}$ is
algebraic* for one $s\in S(\mathbb{F}{})$, then it is algebraic* for all $s$.

\begin{remark}
\label{g30}Until it is shown that $\End(\1)=\mathbb{Q}$ in $\mathcal{M}%
{}_{\ell}(\mathbb{F}{})$, this category is of only modest interest. For
abelian motives, what is needed is a proof of the rationality conjecture
(\cite{milne2009}, 4.1).\footnote{Let $A$ be an abelian variety over
$\mathbb{Q}{}^{\mathrm{al}}$ with good reduction to an abelian variety $A_{0}$
over $\mathbb{F}{}$; the cup product of the specialization to $A_{0}$ of any
absolute Hodge class on $A$ with a product of divisors of complementary
codimension lies in $\mathbb{Q}{}$ .}
\end{remark}

\subsection{Mixed characteristic}

Fix a prime $w$ of $\mathbb{Q}{}^{\mathrm{al}}$ dividing $p{}$ and a prime
number $\ell\neq p$. Theorem \ref{g43} holds unconditionally for algebraic*
classes: let $A$ be an abelian variety over $\mathbb{Q}{}^{\mathrm{al}}$ with
good reduction at $w$ to an abelian variety $A_{0}$ over $\mathbb{F}{}$, and
let $t$ be an absolute Hodge class (e.g., an algebraic* class) on $A$; then
$(t_{\ell})_{0}$ is an algebraic* class on $A_{0}$. The proof is the same as
before, using the * version of Proposition \ref{g13}.

We deduce, as in the proof of Theorem \ref{g40}, that the Tate conjecture
holds for algebraic* classes on abelian varieties over $\mathbb{F}{}$, i.e.,
that $\ell$-adic Tate classes on abelian varieties over $\mathbb{F}{}$ are algebraic*.

Let $\mathcal{M}{}_{H}^{\prime}(\mathbb{Q}{}^{\mathrm{al}})$ denote the
tannakian subcategory of $\mathcal{M}{}_{H}(\mathbb{Q}{}^{\mathrm{al}})$
generated by abelian varieties with good reduction at $w$. There is a
canonical tensor functor $\mathcal{M}{}_{H}^{\prime}(\mathbb{Q}{}%
^{\mathrm{al}})\rightarrow\mathcal{M}{}_{\ell}(\mathbb{F}{})$.

\section{Statements implying the Lefschetz standard conjecture}

\subsection{Conjecture $D$ and the Lefschetz standard conjecture}

Let $H$ be a Weil cohomology theory. The next statement goes back to Grothendieck.

\begin{proposition}
\label{g67}Assume that $H$ satisfies the strong Lefschetz theorem. Conjecture
$D(X)$ implies $A(X,L)$ (all $L)$; in the presence of the Hodge standard
conjecture, $A(X,L)$ (one $L$) implies $D(X)$.
\end{proposition}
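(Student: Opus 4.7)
The plan is to exploit three pieces of structure on the graded $\mathbb{Q}$-space $\mathcal{A}^{\ast}(X)$ of algebraic classes modulo $H$-homological equivalence: the Lefschetz operator $L$, the intersection pairing $\mathcal{A}^{i}\times\mathcal{A}^{d-i}\to\mathbb{Q}$ coming from Poincar\'{e} duality on $H$, and the primitive (Lefschetz) decomposition inherited from the strong Lefschetz theorem on $H^{\ast}$. The forward direction is essentially dimension counting, the converse a propagation-of-positivity argument.

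For $D(X)\Rightarrow A(X,L)$ with $L$ arbitrary: the strong Lefschetz theorem provides an isomorphism $L^{d-2i}\colon H^{2i}(X)\to H^{2d-2i}(X)$ for $2i\leq d$. Since iterated intersection with a hyperplane section sends algebraic classes to algebraic classes, the map restricts to $\mathcal{A}^{i}(X)\to\mathcal{A}^{d-i}(X)$, and this restriction is injective because $\mathcal{A}^{i}\hookrightarrow H^{2i}$ by definition of homological equivalence. Under $D(X)$, numerical and homological equivalence agree on algebraic classes, so the intersection pairing $\mathcal{A}^{i}\times\mathcal{A}^{d-i}\to\mathbb{Q}$ is non-degenerate, forcing $\dim\mathcal{A}^{i}=\dim\mathcal{A}^{d-i}$. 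An injective linear map between finite-dimensional vector spaces of equal dimension is an isomorphism; this is $A(X,L)$, and the argument is visibly independent of the choice of ample $L$.

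For the converse, assume $A(X,L)$ for one fixed ample $L$ together with the Hodge standard conjecture. First I would use $A(X,L)$ in conjunction with strong Lefschetz to descend the primitive decomposition from $H^{\ast}$ to $\mathcal{A}^{\ast}$: writing $\mathcal{P}^{j}$ for the primitive algebraic classes of codimension $j$ (those killed by $L^{d-2j+1}$), one obtains $\mathcal{A}^{i}=\bigoplus_{k}L^{k}\mathcal{P}^{i-k}$, because the projectors onto the primitive summands are polynomials in $L$ and in the inverses supplied by $A(X,L)$, hence preserve algebraicity. Second, the classical orthogonality identities for the Lefschetz decomposition (see \cite{kleiman1968}, \S 1) express the intersection pairing on $\mathcal{A}^{\ast}$ as an orthogonal sum, block by block, of nonzero scalar multiples of the Hodge pairings $(\alpha,\beta)\mapsto\langle L^{d-2j}\alpha,\beta\rangle$ on the $\mathcal{P}^{j}$. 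The Hodge standard conjecture asserts that $\alpha\mapsto(-1)^{j}\langle L^{d-2j}\alpha,\alpha\rangle$ is positive definite on $\mathcal{P}^{j}$, hence non-degenerate; summing the blocks yields non-degeneracy of the full intersection pairing on $\mathcal{A}^{\ast}(X)$, which is exactly $D(X)$.

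The main obstacle is the bookkeeping in the converse direction: one must verify that the primitive decomposition really does descend to $\mathcal{A}^{\ast}$ under $A(X,L)$, and that the Hodge-Riemann forms on the primitive pieces patch together with the correct signs to yield non-degeneracy of the ordinary intersection pairing. Both steps are standard but require careful combinatorics with the Lefschetz decomposition; neither uses more than linear algebra once the primitive decomposition is in hand. The forward direction, by contrast, is essentially automatic.
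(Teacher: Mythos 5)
Your proposal is correct and takes essentially the same route as the paper: the forward direction is the identical dimension count (under $D(X)$ the pairing $A_{H}^{i}(X)\times A_{H}^{d-i}(X)\to\mathbb{Q}$ is nondegenerate, so $\dim A_{H}^{i}=\dim A_{H}^{d-i}$, and the injective restriction of $L^{d-2i}$ is therefore surjective). For the converse the paper only remarks that it is ``equally obvious''; the argument you spell out --- descending the Lefschetz primitive decomposition to $A_{H}^{\ast}(X)$ via $A(X,L)$ and invoking the definiteness of the Hodge--Riemann forms on the primitive blocks --- is the standard way to make that precise and is sound.
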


\begin{proof}
Conjecture $D(X)$ says that the pairing
\begin{equation}
x,y\mapsto\langle x\cdot y\rangle\colon A_{H}^{i}(X)\times A_{H}%
^{d-i}(X)\rightarrow A_{H}^{d}(X)\simeq\mathbb{Q}{} \label{e1}%
\end{equation}
is nondegenerate for all $i\leq d\overset{\df}{=}\dim(X)$. Therefore, $\dim
A_{H}^{i}(X)=\dim A_{H}^{d-i}(X)$. As the map $L^{d-2i}\colon A_{H}%
^{i}(X)\rightarrow A_{H}^{d-i}(X)$ is injective, it is surjective, i.e.,
$A(X,L)$ holds. The converse is equally obvious.
\end{proof}

\begin{corollary}
\label{g68}Conjecture $D(X\times X)$ implies $B(X)$.
\end{corollary}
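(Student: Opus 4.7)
The plan is to apply Proposition \ref{g67} with $X$ replaced by $X \times X$ and then invoke the classical equivalence, due to Kleiman, between $A$ on $X \times X$ for a box-sum polarization and $B(X)$. By hypothesis $D(X \times X)$ holds, and the implication $D \Rightarrow A$ in Proposition \ref{g67} (which requires no Hodge standard input) immediately yields $A(X \times X, L')$ for every Lefschetz operator $L'$ on $X \times X$; in particular for $L' = p_1^{*}L + p_2^{*}L$, the box-sum polarization associated to a Lefschetz operator $L$ on $X$.

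The remaining step is to deduce $B(X)$ from $A(X \times X, L \boxplus L)$, a classical equivalence of Kleiman (see \cite{kleiman1994}). The argument is $\mathfrak{sl}_2$-theoretic: the operator $L' = L \otimes 1 + 1 \otimes L$ acting on $H^{*}(X \times X)$ determines, together with the grading, an $\mathfrak{sl}_2$-triple whose weight-lowering operator $\Lambda'$ is expressible, via K\"{u}nneth, in terms of $\Lambda \otimes 1$ and $1 \otimes \Lambda$. The surjectivity of $(L')^{4d-2k}$ on algebraic classes provided by $A(X \times X, L')$, combined with Poincar\'{e} duality and projection onto the K\"{u}nneth summand $H^{i}(X) \otimes H^{i}(X) \subset H^{2i}(X \times X)$, should produce an algebraic class on $X \times X$ whose correspondence action on $H^{*}(X)$ realizes the inverse Lefschetz isomorphism $\theta^{i}$---precisely the content of $B(X)$.

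The main obstacle is this last equivalence: the reconstruction, from surjectivity statements on $X \times X$, of a specific algebraic correspondence on $X \times X$ representing each Lefschetz inverse $\theta^{i}$ on $H^{*}(X)$. This is purely combinatorial and representation-theoretic, independent of the hypothesis $D(X\times X)$; once granted, the corollary follows at once from Proposition \ref{g67} applied to $X \times X$.
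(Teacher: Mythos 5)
Your proposal is correct and follows the same route as the paper: apply Proposition \ref{g67} to $X\times X$ to obtain $A(X\times X,L\otimes 1+1\otimes L)$, then invoke Kleiman's theorem that this statement implies $B(X)$ (\cite{kleiman1968}, Theorem 4-1). The paper simply cites Kleiman for the second step rather than sketching the $\mathfrak{sl}_2$/K\"unneth mechanism behind it, so your elaboration there is unnecessary but not wrong.
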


\begin{proof}
Indeed, $A(X\times X,L\otimes1+1\otimes L)$ implies $B(X)$ (\cite{kleiman1968}%
, Theorem 4-1).
\end{proof}

\begin{remark}
\label{g75}If Conjecture $D(X\times X)$ holds whenever $X$ is an abelian
scheme over a complete smooth curve over $\mathbb{C}{}$, then the Hodge
conjecture holds for abelian varieties.
\end{remark}

\subsection{Does Conjecture C imply Conjecture B?}

Kleiman (1994) states eight versions of Grothendieck's standard conjecture of
Lefschetz type. He proves that six of the eight are equivalent and that a
seventh is \textquotedblleft practically equivalent\textquotedblright\ to the
others, but he states that the eighth version, Conjecture C, \textquotedblleft
is, doubtless, truly weaker\textquotedblright. In this subsection we examine
whether Conjecture C is, in fact, equivalent to the remaining conjectures.

Let $H$ be a Weil cohomology theory on the algebraic varieties over an
algebraically closed field $k$. Assume that $H$ satisfies conjecture $C$, and
let $\Mot_{H}(k)$ denote the category of motives defined using algebraic
classes modulo homological equivalence as the correspondences. It is a graded
pseudo-abelian rigid tensor category over $\mathbb{Q}{}$ equipped with a
graded tensor functor $\omega_{H}\colon\Mot_{H}\rightarrow\mathbb{Z}{}%
$-$\Vc_{Q{}}$, where $Q$ is the coefficient field of $H$.

\begin{proposition}
\label{g77} Assume that $H$ satisfies the strong Lefschetz theorem in addition
to Conjecture $C$. If $\omega_{H}$ is conservative, then $H$ satisfies the
Lefschetz standard conjecture.
\end{proposition}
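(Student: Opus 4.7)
The plan is to transfer the conclusion of the strong Lefschetz theorem from cohomology back into the category $\Mot_H(k)$ via conservativity of $\omega_H$, and then read off the existence of an algebraic correspondence inverting the Lefschetz operator.

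First I would invoke Conjecture $C$ to install the weight grading inside $\Mot_H(k)$: the K\"unneth projectors, being algebraic by hypothesis, define a decomposition $h(X)=\bigoplus_{i=0}^{2d} h^i(X)$ for every smooth projective variety $X$ of dimension $d$. The class of a hyperplane section lies in $A_H^1(X)$, and composing with the K\"unneth projectors yields, for each $i\leq d$, a morphism $L^{d-i}\colon h^i(X)\to h^{2d-i}(X)(d-i)$ in $\Mot_H(k)$.

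Next I would apply $\omega_H$. By the strong Lefschetz theorem, $\omega_H(L^{d-i})$ is an isomorphism of graded $Q$-vector spaces. Since $\omega_H$ is assumed conservative, $L^{d-i}$ is already an isomorphism in $\Mot_H(k)$. Its two-sided inverse $\Lambda^{d-i}\colon h^{2d-i}(X)(d-i)\to h^i(X)$ is then a morphism in $\Mot_H(k)$, and so, by the very definition of this category, it is represented by an algebraic cycle on $X\times X$ modulo homological equivalence. The cohomology class of that cycle realizes the inverse Lefschetz operator $\Lambda$, which by \cite{kleiman1994}, 4-1, is one of the standard equivalent formulations of Conjecture $B(X)$.

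The body of the argument has no real obstacle beyond the bookkeeping of Tate twists and the K\"unneth decomposition; its substantive content is the single observation that a conservative tensor functor reflects isomorphisms. The genuine difficulty lies not in the proof but in the hypothesis: establishing conservativity of $\omega_H$ appears at least as delicate as Conjecture $B$ itself, so the proposition should be read as showing that, granted $C$, the Lefschetz standard conjecture is tannakian-dual to the conservativity of the motivic fibre functor.
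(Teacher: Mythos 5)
Your argument is correct and follows essentially the same route as the paper: Conjecture $C$ places the iterated Lefschetz morphism inside $\Mot_{H}(k)$, the strong Lefschetz theorem makes its image under $\omega_{H}$ invertible, and conservativity reflects this back to an isomorphism of motives. The only cosmetic difference is that you read off Conjecture $B$ directly from the algebraicity of the inverse morphism, whereas the paper applies $\Hom(\1,-)$ to the motivic isomorphism and concludes $A(X,L)$; the two endpoints are equivalent by Kleiman's 4-1.
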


\begin{proof}
Let $L\colon H^{r}(X)\rightarrow H^{r+2}(X)(1)$ be the Lefschetz operator
defined by a hyperplane section of $X$. By assumption%
\begin{equation}
L^{d-2i}\colon H^{2i}(X)(i)\rightarrow H^{2d-2i}(X)(d-i) \label{eq2}%
\end{equation}
is an isomorphism for all $2i\leq d\overset{\df}{=}\dim(X)$. As $\omega_{H}$
is conservative,%
\begin{equation}
l^{d-2i}\colon h^{2i}(X)(i)\rightarrow h^{2d-2i}(X)(d-i) \label{eq3}%
\end{equation}
is an isomorphism for all $2i\leq d$. On applying the functor $\Hom(\1,-)$ to
this isomorphism, we get an isomorphism%
\[
L^{d-2i}\colon A_{H}^{i}(X)\rightarrow A_{H}^{d-i}(X).
\]
Thus, Conjecture $A(X,L)$ is true.
\end{proof}

\begin{corollary}
\label{g78}Assume that $H$ satisfies the strong Lefschetz theorem and
Conjecture $C$. If $\Mot_{H}(k)$ is tannakian, then $H$ satisfies Conjecture
$B$.
\end{corollary}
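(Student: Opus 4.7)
The plan is to reduce to Proposition~\ref{g77} by observing that the tannakian assumption automatically makes $\omega_{H}$ conservative, and then to pass from Conjecture $A$ to Conjecture $B$ via a standard product construction.

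First, I would unpack the definition of ``tannakian'': $\Mot_{H}(k)$ is a rigid abelian tensor category and the fibre functor $\omega_{H}\colon \Mot_{H}(k)\rightarrow\mathbb{Z}\text{-}\Vc_{Q}$ is exact and faithful (as recorded in the footnote of Section~5 defining ``fibre functor''). I would then invoke the standard lemma that an exact faithful functor between abelian categories is conservative: if $\omega_{H}(f)$ is an isomorphism, exactness gives $\omega_{H}(\Ker f)=0=\omega_{H}(\Coker f)$, and for any object $Y$ with $\omega_{H}(Y)=0$ we have $\omega_{H}(\id_{Y})=0$, so faithfulness forces $\id_{Y}=0$ and hence $Y=0$. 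Consequently $\Ker f=\Coker f=0$, so $f$ is an isomorphism.

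With $\omega_{H}$ conservative, Proposition~\ref{g77} yields $A(X,L)$ for every algebraic variety $X$ over $k$ and every hyperplane class $L$. To extract $B(X)$, I would apply $A$ to $X\times X$ equipped with the Lefschetz operator $L\otimes 1+1\otimes L$ and invoke \cite{kleiman1968}, Theorem~4-1, which gives $A(X\times X,\,L\otimes 1+1\otimes L)\Rightarrow B(X)$ --- this is precisely the implication already exploited in the proof of Corollary~\ref{g68}.

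There is no serious obstacle here; the substance of the corollary is simply the observation that the tannakian hypothesis delivers exactly the conservativity input required by Proposition~\ref{g77}, after which the routine product-variety reduction concludes.
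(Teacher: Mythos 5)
Your proposal is correct and follows essentially the same route as the paper, whose entire proof is the one-line observation that fibre functors on tannakian categories are conservative, followed by an implicit appeal to Proposition~\ref{g77}. You merely spell out the conservativity argument and make explicit the standard passage from $A(X\times X,\,L\otimes1+1\otimes L)$ to $B(X)$, which the paper leaves inside the statement of Proposition~\ref{g77}.
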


\begin{proof}
Fibre functors on tannakian categories are conservative.
\end{proof}

Proposition \ref{g67} shows that a Weil cohomology theory satisfying both the
strong Lefschetz theorem and Conjecture $D$ also satisfies Conjecture $B$.
Here we prove a stronger result.

\begin{proposition}
\label{g69}Suppose that there exists a Weil cohomology theory $\mathcal{H}{}$
satisfying both the strong Lefschetz theorem and Conjecture $D$. Then every
Weil cohomology theory $H$ satisfying the strong Lefschetz theorem and
Conjecture $C$ also satisfies Conjecture $B$.
\end{proposition}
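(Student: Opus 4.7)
The plan is to reduce the proposition to the conservativity of the natural projection $\Mot_{H}(k)\to\Mot_{\num}(k)$, and then to establish that conservativity by a Jannsen-style nilpotency argument. Conjecture~$D$ for $\mathcal{H}$ means that $\mathcal{H}$-homological and numerical equivalence coincide, so $\Mot_{\mathcal{H}}(k)=\Mot_{\num}(k)$; by Jannsen's theorem this category is semisimple abelian. Applying Corollary~\ref{g68} with $H=\mathcal{H}$ gives Conjecture~$B$ for $\mathcal{H}$, so for each $X$ and each $i\leq d:=\dim X$, the Lefschetz morphism $l^{d-2i}\colon h^{2i}(X)(i)\to h^{2d-2i}(X)(d-i)$ is an isomorphism in $\Mot_{\num}(k)$.

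Because $H$ satisfies~$C$, the K\"unneth projectors are algebraic, so $\Mot_{H}(k)$ carries a K\"unneth grading and the morphism $l^{d-2i}$ is defined there as well. The evident full $\mathbb{Q}{}$-linear tensor functor $\Mot_{H}(k)\to\Mot_{\num}(k)$ sends it to the isomorphism of the previous paragraph. If this functor is conservative, then $l^{d-2i}$ is already an isomorphism in $\Mot_{H}(k)$, and its inverse, being a morphism of that category, is represented by an algebraic cycle class, furnishing the algebraic correspondence required by Conjecture~$B$ for $H$.

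The main task is therefore to show that the kernel ideal $\mathcal{N}\subset\Mot_{H}(k)$ of numerically trivial morphisms is a nil ideal. Fix an object $M$; the algebra $R:=\End_{\Mot_{H}(k)}(M)$ is finite-dimensional over $\mathbb{Q}{}$, since its elements are classes of algebraic cycles modulo $H$-homological equivalence and so embed into the (finite-dimensional) cohomology of a product of smooth projective varieties. For $\alpha\in\mathcal{N}\cap R$, every power $\alpha^{n}$ also lies in $\mathcal{N}$, so the categorical trace $\mathrm{tr}(\alpha^{n})=\mathrm{tr}(\alpha\circ\alpha^{n-1})$ vanishes for each $n\geq 1$. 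Hence $\omega_{H}(\alpha)$ acts on $\omega_{H}(M)$ with all Newton power sums zero and is nilpotent; since $\omega_{H}$ is faithful on $\Mot_{H}(k)$ by the very definition of $H$-homological equivalence, $\alpha$ itself is nilpotent in $R$. Conservativity now follows: given $f\colon M\to N$ mapping to an isomorphism in $\Mot_{\num}(k)$, lift the inverse through the full projection to some $\tilde{g}\colon N\to M$; the differences $\tilde{g}f-\id_{M}$ and $f\tilde{g}-\id_{N}$ lie in $\mathcal{N}$, are therefore nilpotent, and so $\tilde{g}f$ and $f\tilde{g}$ are invertible in $\Mot_{H}(k)$, whence $f$ is an isomorphism.

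The hardest step I expect is identifying the categorical trace on $\Mot_{H}(k)$ with the classical intersection pairing of algebraic cycles, so that ``$\alpha$ is numerically trivial as a cycle class'' translates into the tensor-categorical condition ``$\mathrm{tr}(\alpha\circ\beta)=0$ for every $\beta$'' needed above. This reduces to the standard rigid-tensor computation identifying the trace of a self-correspondence with its intersection number against the diagonal on $X\times X$, but it must be checked on the K\"unneth-graded pieces $h^{i}(X)$ rather than merely on $h(X)$. Once this identification is in place, the remaining passage from nilpotency of $\mathcal{N}$ to the conservativity of the projection and finally to Conjecture~$B$ for $H$ is formal.
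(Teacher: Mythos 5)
Your proof is correct and follows essentially the same route as the paper: pass to $\Mot_{\mathrm{num}}(k)=\Mot_{\mathcal{H}}(k)$, where Conjecture $B$ for $\mathcal{H}$ (obtained from $D$ plus strong Lefschetz) makes the Lefschetz morphism invertible, then lift the inverse back to $\Mot_{H}(k)$ using nilpotence of the kernel of $\Mot_{H}(k)\rightarrow\Mot_{\mathrm{num}}(k)$ --- the only difference is that the paper simply cites \cite{jannsen1992} for that nilpotence, whereas you re-derive the nil-ideal property by the trace/Newton-identity argument (which is indeed Jannsen's). One small correction: Conjecture $B$ concerns $l^{d-i}\colon h^{i}(X)\rightarrow h^{2d-i}(X)(d-i)$ for every $i\leq d$, not only the even-degree maps $l^{d-2i}\colon h^{2i}(X)(i)\rightarrow h^{2d-2i}(X)(d-i)$ that you display; since $C$ holds for $H$, all K\"{u}nneth components $h^{i}(X)$ exist in $\Mot_{H}(k)$ and your argument applies verbatim to each of them.
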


\begin{proof}
Let $\mathcal{H}{}$ and $H$ be Weil cohomology theories satisfying the strong
Lefschetz theorem and assume that $\mathcal{H}{}$ (resp.$~H$) satisfies
Conjecture $D$ (resp.~Conjecture $C$). Then $\mathcal{H}{}$ satisfies the
Lefschetz conjecture (\ref{g67}), in particular, Conjecture $C$. Let
$\Mot_{\mathrm{num}}(k)=\Mot_{\mathcal{H}{}}(k)$ be the category of motives
defined using algebraic cycles modulo numerical equivalence as
correspondences. Then $\Mot_{\mathrm{num}}$ is a semisimple tannakian category
over $\mathbb{Q}{}$ (Jannsen, Deligne), and there is a quotient functor
$q\colon\Mot_{H}\rightarrow\Mot_{\mathrm{num}}$. For each $M$ in $\Mot_{H}$,
the map $\End(M)\rightarrow\End(qM)$ is surjective with kernel the radical of
the ring $\End(M)$, and this radical is nilpotent (\cite{jannsen1992}).

The conditions on $\mathcal{H}{}$ imply that it satisfies Conjecture $B$
(Proposition \ref{g67}). This means that for each $i\leq d\overset{\df}{=}%
\dim(X)$, there exists a morphism $h_{\mathrm{num}}^{2d-i}(X)(d-i)\rightarrow
h_{\mathrm{num}}^{i}(X)$ inducing the inverse of the map%
\[
L^{d-i}\colon\mathcal{H}{}_{\mathrm{num}}^{i}(X)\rightarrow\mathcal{H}%
{}_{\mathrm{num}}^{2d-i}(X)(d-i)\text{.}%
\]

Write $\alpha$ for the morphism $h^{i}(X)\rightarrow h^{2d-i}(X)(d-i)$ in
$\Mot_{H}(k)$ inducing the isomorphism
\begin{equation}
L^{d-i}\colon H^{i}(X)\rightarrow H^{2d-i}(X)(d-i). \label{eq4}%
\end{equation}
According to the last paragraph, there exists a morphism $\beta\colon
h^{2d-i}(X)(d-i)\rightarrow h^{i}(X)$ such that $q(\beta\circ\alpha
)=\id_{h_{\mathrm{num}}^{i}(X)}$. Now $\beta\circ\alpha=1+n$ in $\End(h^{i}%
(X))$, where $n$ is nilpotent. On replacing $\beta$ with $(1-n+n^{2}%
-\cdots)\circ\beta$, we find that $\beta\circ\alpha=1$ in $\End(h^{i}(X))$.
Hence the inverse of the map (\ref{eq4}) is algebraic, as required.
\end{proof}

\begin{proposition}
\label{g71}If there exists one Weil cohomology theory satisfying the strong
Lefschetz theorem and Conjecture $D$, then every Weil cohomology theory
satisfying Conjecture $D$ also satisfies the strong Lefschetz theorem
\end{proposition}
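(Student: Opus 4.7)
The plan is to use Conjecture $D$ as a bridge between the two Weil cohomology theories: since $D$ identifies homological and numerical equivalence, an algebraic correspondence that inverts the Lefschetz operator in $\mathcal{H}{}$-cohomology will automatically invert it in $H$-cohomology.

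First, since $\mathcal{H}{}$ satisfies the strong Lefschetz theorem together with Conjecture $D$, in particular $D(X\times X)$ holds, and Corollary \ref{g68} gives Conjecture $B(X)$ for $\mathcal{H}{}$. Thus, for each $i\le d\overset{\df}{=}\dim(X)$, there is an algebraic correspondence $\gamma$ on $X\times X$ whose $\mathcal{H}{}$-cohomology class is the two-sided inverse of the Lefschetz operator $L^{d-i}\colon\mathcal{H}{}^{i}(X)\to\mathcal{H}{}^{2d-i}(X)(d-i)$. Conjecture $B(X)$ moreover furnishes algebraic K\"unneth projectors $\pi_{j}$ on $X$ (Conjecture $C$). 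Writing $\lambda$ for a fixed algebraic correspondence inducing $L^{d-i}$ in every Weil cohomology theory (for instance a component of the diagonal cupped with the $(d-i)$-th power of a hyperplane class), the inversion property of $\gamma$ amounts to two cycle-level identities
\[
\gamma\circ\lambda-\pi_{i}=0,\qquad\lambda\circ\gamma-\pi_{2d-i}=0
\]
modulo $\mathcal{H}{}$-homological equivalence. By Conjecture $D$ for $\mathcal{H}{}$ applied to $X\times X$, these identities in fact hold modulo numerical equivalence.

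Now let $H$ be any Weil cohomology theory satisfying Conjecture $D$. On $X\times X$, numerical equivalence coincides with $H$-homological equivalence, so the same two identities persist when read off in $H$-cohomology. Consequently the very same algebraic correspondence $\gamma$ is a two-sided inverse to $L^{d-i}$ as an operator on $H^{\ast}(X)$, and therefore
\[
L^{d-i}\colon H^{i}(X)\to H^{2d-i}(X)(d-i)
\]
is an isomorphism for every $i\le d$; this is the strong Lefschetz theorem for $H$. The only delicate point in the argument is the bookkeeping of the K\"unneth projectors and Tate twists, so that the correspondences $\gamma$ and $\lambda$ act between the intended cohomological degrees; once that is in place, the passage from $\mathcal{H}{}$-cohomology to $H$-cohomology through numerical equivalence is automatic and requires no further geometric input.
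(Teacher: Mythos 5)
Your argument is correct and is essentially the paper's own proof, unpacked to the level of correspondences: the paper phrases the same transfer as the statement that $l^{d-i}\colon h^{i}(X)\to h^{2d-i}(X)(d-i)$ is an isomorphism in $\Mot_{\mathrm{num}}(k)$ (obtained from $B(X)$ for $\mathcal{H}$) and then applies the fibre functor $H$, which exists precisely because $H$ satisfies Conjecture $D$ — exactly your passage through numerical equivalence. One small correction: the step from $\mathcal{H}$-homological to numerical equivalence is automatic (homologically trivial cycles are always numerically trivial), so Conjecture $D$ for $\mathcal{H}$ is really consumed earlier, in producing the algebraic inverse $\gamma$ and the projectors $\pi_{j}$ via Corollary \ref{g68}.
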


\begin{proof}
If there exists a Weil cohomology theory satisfying the strong Lefschetz
theorem and Conjecture $D$, then in $\Mot_{\mathrm{num}}(k)$,
\[
l^{d-i}\colon h^{i}(X)\rightarrow h^{2d-i}(X)(d-i)
\]
is an isomorphism for $i\leq d$. Let $H$ be a Weil cohomology theory
satisfying Conjecture $D$. On applying $H$ to this isomorphism, we get an
isomorphism%
\[
L^{d-i}\colon H^{i}(X)\rightarrow H^{2n-r}(X)(n-r).
\]

\end{proof}

\begin{remark}
\label{g72}Because $\Mot_{\mathrm{num}}$ is Tannakian, there exists a field
$Q$ of characteristic zero ${}$ and a $Q$-valued fibre functor $\omega$. Then
$\mathcal{H}{}\colon X\rightsquigarrow\bigoplus_{i}\omega(X,\pi_{i},0)$ is a
Weil cohomology theory satisfying Conjecture $D$. It remains to show that
$\omega$ can be chosen so that $\mathcal{H}{}$ satisfies the strong Lefschetz
theorem. This comes down to showing that $l^{d-i}\colon h^{i}(X)\rightarrow
h^{2d-i}(X)(d-i)$ is an isomorphism in $\Mot_{\mathrm{num}}(k)$.
\end{remark}

\begin{remark}
\label{g74}Every Weil cohomology theory satisfying the weak Lefschetz theorem
also satisfies the strong Lefschetz theorem (Katz and Messing 1974,
Corollaries to Theorem~1).
\end{remark}

\bibliographystyle{cbe}
\bibliography{D:/Current/refs}

\end{document}